\begin{document}

\newtheorem{theorem}{Theorem}[section]
\newtheorem*{theorem*}{Theorem}
\newtheorem{corollary}[theorem]{Corollary}
\newtheorem*{corollary*}{Corollary}
\newtheorem*{remark}{Remark}
\newtheorem*{question}{Question}
\newtheorem*{example}{Example}
\newtheorem*{conjecture}{Conjecture}
\newtheorem{proposition}[theorem]{Proposition}
\newtheorem{lemma}[theorem]{Lemma}
\newtheorem*{lemma*}{Lemma}
\theoremstyle{definition}
\newtheorem{definition}[theorem]{Definition}
\newtheorem*{definition*}{Definition}

\newcommand{\symdiff}{\hspace{0.05em}\mbox{$\triangle$}\hspace{0.05em}}
\newcommand{\normal}{\hspace{0.05em}\triangleleft\hspace{0.05em}}
\newcommand{\actson}{\curvearrowright}

\newcommand{\SLnbb}[1]{\mathrm{SL}_{n}(\mathbb{#1})}
\newcommand{\SL}{\mathrm{SL}}
\newcommand{\SLn}{\mathrm{SL}_{n}}
\newcommand{\PSLnbb}[1]{\mathrm{PSL}_{n}(\mathbb{#1})}
\newcommand{\PSL}{\mathrm{PSL}}
\newcommand{\PSLn}{\mathrm{PSL}_{n}}
\newcommand{\GLnbb}[1]{\mathrm{GL}_{n}(\mathbb{#1})}
\newcommand{\GL}{\mathrm{GL}}
\newcommand{\GLn}{\mathrm{GL}_{n}}

\newcommand{\Haar}{\mathrm{Haar}}
\newcommand{\Ind}{\mathrm{Ind}}
\newcommand{\Symm}{\mathrm{Symm}}
\newcommand{\Comm}{\mathrm{Comm}}
\newcommand{\Har}{\mathrm{Har}}
\newcommand{\proj}{\mathrm{proj}}
\renewcommand{\Re}{\mathrm{Re}}
\newcommand{\bbR}{\mathbb{R}}
\newcommand{\bbZ}{\mathbb{Z}}
\newcommand{\bbone}{\mathbbm{1}}

\newcommand{\supp}[1][]{\ifthenelse{\equal{#1}{}}{\mathrm{supp}~}{\mathrm{supp}~#1}}
\newcommand{\rpf}[2]{#1\hspace{-.175em}\sslash\hspace{-.2em}#2}

\newcommand{\spacedcap}{\hspace{1pt}\cap\hspace{1pt}}

\newcommand{\stab}{\mathrm{stab}}

\newcommand{\arxiv}[1]{\href{http://arxiv.org/abs/#1}{arXiv:#1}}

\title[A Normal Subgroup Theorem for Commensurators]{A Normal Subgroup Theorem for \\ Commensurators of Lattices}
\author{Darren Creutz}\thanks {The authors gratefully acknowledge the support
of the NSF through grant 1007227.}
\author{Yehuda Shalom}
\date{}
\address{Department of Mathematics, Vanderbilt University}
\address{Department of Mathematics UCLA, and School of Mathematical Sciences,
Tel-Aviv University}

\begin{abstract}
We establish a general normal subgroup theorem for commensurators 
of lattices in locally compact groups. While the statement
is completely elementary, its proof, which rests on the 
original strategy of Margulis in the case of higher rank lattices, 
relies heavily on analytic tools pertaining to amenability and
Kazhdan's property (T). It is a counterpart to the
normal subgroup theorem for irreducible lattices of Bader and the second 
named author, and may also be
used to sharpen that result when one of the ambient factors is totally
disconnected.
\end{abstract}

\maketitle

\section{Introduction}

An interesting feature of Margulis' rigidity theory of lattices in semisimple algebraic groups is the parallelism between results in the higher rank setting, 
and those for dense commensurators of lattices with no rank assumption. 
Most notable here is Margulis' superrigidity, 
which was fundamental
to the proof of the arithmeticity theorem of higher rank lattices. 
Along with it was established an analogous rigidity result, key to
the striking arithmeticity of {\it all} lattices with dense 
commensurators (see \cite{Ma91}).

Over the years Margulis' rigidity theory has been widely extended 
to lattices in a general setting, first of more geometric flavor 
(e.g.~\cite{LMZ94}, \cite{burgermozes2}, \cite{burgermozes}) and then, following 
\cite{Sha00}, to the very abstract ``higher rank'' framework
of (irreducible lattices in) products of at least two locally 
compact groups -- 
cf.~\cite{monod}, \cite{GKM08}, \cite{CR09}, \cite{FM09}, \cite{BFGM}, \cite{MS04}, \cite{MS06}. The parallelism between the theories of ``higher rank lattices''  
and ``rank free dense commensurators'' remained (see also ~\cite{remy}), and essentially all rigidity results in the former framework found a counterpart in the latter. 
All but one -- the celebrated 
normal subgroup
theorem (abbreviated NST hereafter). 

In \cite{BS05} a general NST was established by Bader and the second author for irreducible lattices in products of at least two locally compact, compactly generated 
groups (see also 
\cite{burgermozes} in the case of tree lattices). The main purpose
of this paper is to establish an analogous ``rank free'' result for dense commensurators of lattices in one such group. This theorem also allows for a 
sharpening of \cite{BS05} when one of the ambient factors is totally disconnected. 
 As in \cite{BS05}, and following the original spirit of Margulis' NST, 
the precise statement remains
of entirely elementary and purely group theoretic nature, while the proof 
employs heavy analytic techniques pertaining to Kazhdan's property (T) and amenability.

Here is the main result of the paper:
\begin{theorem}[Theorem \ref{T:1} and Proposition \ref{P:oneone}]
Let $G$ be a locally compact, second countable compactly generated group, which is not a compact extension of an abelian group.

\smallskip

Let $\Gamma < G$ be a discrete co-compact subgroup, or more generally, a finitely generated square integrable lattice (see Section  \ref{S:Thalf} below), and $\Lambda < G$ be a dense subgroup 
which contains and commensurates $\Gamma$ (i.e.~$\lambda \Gamma \lambda ^{-1} \cap
\Gamma$ has finite index in both  $\lambda \Gamma \lambda ^{-1}$ and $\Gamma$, for all $\lambda \in \Lambda$). 

\smallskip

If every closed, normal, non-cocompact subgroup of $G$ has 
finite intersection with $\Lambda$, 
then every infinite normal subgroup $N \normal \Lambda$ contains 
a finite index subgroup of $\Gamma$. (The converse holds as well, and 
is easily verified).

\smallskip

Consequently, if $\varphi: \Lambda \to H$ is a dense homomorphism into a locally compact totally disconnected group $H$,
such that $K:={\overline {\varphi(\Gamma)}}<H$ is compact open and 
$\varphi ^{-1} (K) = \Gamma$, then there is a natural bijection between commensurability 
classes of infinite normal subgroups of $\Lambda$, and commensurability classes of open normal subgroups of $H$.
\end{theorem}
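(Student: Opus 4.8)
The plan is to run the two-step Margulis strategy --- amenability followed by a property (T)-type rigidity --- but carried out for the quotient $\Lambda/N$ in place of a quotient of a single lattice, with a Furstenberg--Poisson boundary $B$ of $G$ serving as the arena on which the dense commensurator $\Lambda$ acts. The whole argument is organized around proving that $\bar\Gamma := \Gamma N/N \cong \Gamma/(\Gamma \cap N)$ is finite, which is exactly the assertion that $N$ contains the finite-index subgroup $\Gamma \cap N$ of $\Gamma$. First I would fix an admissible measure and form the associated boundary $B$ of $G$, and then invoke the fact that $\Lambda$ acts on $B$ because it commensurates $\Gamma$, whose boundary is identified with that of $G$. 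Square-integrability of $\Gamma$ enters already here, to guarantee that $B$ is a genuine boundary for the whole package $\Gamma \leq \Lambda \leq G$ with the measure-theoretic features (amenability of the action, double ergodicity) needed downstream; the action $\Lambda \actson B$ is amenable, being the restriction to $\Lambda$ of the amenable action $G \actson B$.

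For the amenability step, given an infinite $N \normal \Lambda$ I would study the $N$-invariant subalgebra $L^\infty(B)^N \subseteq L^\infty(B)$. Since $N$ is normal in $\Lambda$, this algebra is $\Lambda$-invariant, and by density of $\Lambda$ in $G$ together with continuity of $G \actson L^\infty(B)$ it is in fact $G$-invariant; it therefore corresponds to a $G$-factor $B \to Z$ of the boundary. The dichotomy is on whether $Z$ is trivial. If $Z$ is nontrivial, its kernel $M \normal G$ is a proper closed normal subgroup, and it must be non-cocompact, since a cocompact $M$ would make $G/M$ compact and force the $G/M$-boundary $Z$ to be a point. As $N$ acts trivially on $Z$ we get $N \subseteq \Lambda \cap M$, which the hypothesis forces to be finite --- contradicting that $N$ is infinite. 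Hence $Z$ is trivial, i.e. $N$ acts ergodically on $B$; combining ergodicity of $N$ with amenability of $\Lambda \actson B$ (so that $\Lambda/N$ acts amenably on the one-point space $B/N$) yields that $\Lambda/N$, and in particular its subgroup $\bar\Gamma$, is amenable.

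It then remains to upgrade ``$\bar\Gamma$ amenable'' to ``$\bar\Gamma$ finite,'' and this is where I expect the main obstacle to lie, precisely because neither $G$ nor $\Gamma$ is assumed to have property (T) --- only that $G$ is not a compact extension of an abelian group. If $\bar\Gamma$ were infinite and amenable, the quasi-regular representation on $\ell^2(\Lambda/N)$ would carry almost-invariant but no invariant vectors, producing a nontrivial reduced $1$-cohomology class, equivalently a proper affine $\Lambda$-action. Using the amenable boundary $B$ and, crucially, square-integrability of $\Gamma$ --- which lets one induce and compare cocycles and reduced cohomology between $\Gamma$ and $G$ --- I would manufacture a $\Lambda$-equivariant harmonic map from $B$ into a Hilbert space and argue that the hypothesis that $G$ is not compact-by-abelian forces this map to be constant, so the cohomology class vanishes and $\bar\Gamma$ is finite. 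The delicate points here, which are the analytic (property (T)-flavored) heart of the matter, are (i) transporting the harmonic/cohomological data across the square-integrable lattice $\Gamma$ to the \emph{dense, non-discrete} commensurator $\Lambda$, and (ii) ruling out the degenerate boundary behaviour that the non-(compact-by-abelian) hypothesis is exactly designed to exclude.

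With $\bar\Gamma$ forced to be finite, $\Gamma \cap N$ has finite index in $\Gamma$ and lies in $N$, which is the desired conclusion. The converse is elementary, as indicated: if $M \normal G$ were closed, non-cocompact, with $M \cap \Lambda$ infinite, then $M \cap \Lambda \normal \Lambda$ would contain a finite-index subgroup of $\Gamma$, whence the closed subgroup $M$ would contain a finite-index (hence cocompact) subgroup of $\Gamma$ and be cocompact, a contradiction.
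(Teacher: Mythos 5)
The amenability half of your proposal rests on a false step: the assertion that ``the action $\Lambda \actson B$ is amenable, being the restriction to $\Lambda$ of the amenable action $G \actson B$.'' Zimmer's restriction theorem for amenable actions applies only to \emph{closed} subgroups, and $\Lambda$, being dense and viewed as a discrete group, is not closed in $G$. Worse than being unjustified, the claim is actually false in the paper's flagship example: for $\Lambda = \mathrm{SL}_n(\mathbb{Z}[\frac{1}{p}]) < G = \mathrm{SL}_n(\mathbb{R})$, the group $\Lambda$ is a lattice in $G \times \mathrm{SL}_n(\mathbb{Q}_p)$, and by Zimmer's reciprocity the $\Lambda$-action on the Poisson boundary $G/P$ is amenable if and only if the action of $P \times \mathrm{SL}_n(\mathbb{Q}_p)$ on $(G \times \mathrm{SL}_n(\mathbb{Q}_p))/\Lambda$ is amenable; the latter restricts to an action of the nonamenable closed subgroup $\mathrm{SL}_n(\mathbb{Q}_p)$ preserving a probability measure, which would force $\mathrm{SL}_n(\mathbb{Q}_p)$ to be amenable. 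So $\Lambda \actson B$ is \emph{not} amenable, there is no $\Lambda$-equivariant map $B \to P(Z_0)$ to feed into your ergodicity dichotomy, and the amenability half collapses. This is exactly the difficulty the paper's machinery exists to overcome: it uses only amenability of the $\Gamma$-action ($\Gamma$ \emph{is} closed in $G$) to produce a $\Gamma$-map $\varphi : X \to P(Z)$, and then upgrades $\Gamma$-equivariance to $\Lambda$-equivariance via the Contractive Factor Theorem (Theorem \ref{T:factor}), which in turn rests on contractivity of the $\Gamma$-action on the Poisson boundary (Proposition \ref{P:latticecontractive}) and uniqueness of factor maps out of contractive spaces (Proposition \ref{P:contractivemapunique}); the preliminary replacement of $\Lambda$ by $\Gamma N$, so that $\Gamma$ surjects onto $\Lambda/N$, is what makes the pushforward measure $\Lambda$-quasi-invariant in the first place. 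Your dichotomy via the kernel of the $G$-factor of $B$ is a reasonable observation (the paper instead derives cocompactness of $\overline{N}$ directly from the hypothesis and averages over the compact group $G/\overline{N}$), but without an equivariant map it has nothing to act on.

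There are further gaps. Your property (T) half is only a plan, and it mislocates the hypotheses: in the paper, the ``not a compact extension of abelian'' assumption is used group-theoretically in the reduction step (via Lemma \ref{L:finindtrick}) to rule out $[N,N]$ being finite and thus to guarantee that $\overline{[N,N]}$ -- not merely $\overline{N}$ -- is cocompact; cocompactness of $\overline{[N,N]}$ is then precisely what kills the homomorphisms to $\mathbb{R}$ in the final vanishing lemma. Your sketch never establishes this, omits the two intermediate steps (finite-dimensionality of a cohomological irreducible representation via Shalom's Theorem 10.3, and finiteness of $\pi(\Gamma)$ via the Gelander--Karlsson--Margulis extension, Proposition \ref{P:GKM}), and the specific claim that almost invariant vectors in $\ell^2(\Lambda/N)$ produce a nontrivial reduced $1$-cohomology class is not correct as stated. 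You also do not address the second (``Consequently'') part of the statement at all, i.e.\ the bijection of commensurability classes (Proposition \ref{P:oneone}). Finally, in the converse direction, ``finite-index hence cocompact subgroup of $\Gamma$'' fails for non-uniform $\Gamma$: the correct argument is that $M$ then contains a lattice, so the closed normal subgroup $M$ has cofinite Haar measure and $G/M$, having finite Haar measure, is compact.
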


It turns out that a group $H$ as in the second part of the Theorem always exists, and in fact there is a unique minimal one --
the relative profinite completion  $\rpf{\Lambda}{\Gamma}$ of $\Lambda$ w.r.t $\Gamma$, akin to the quotient by a normal subgroup (see \cite{SW09} for details).  Note that it immediately follows
from the Theorem that given any such $H$, every infinite normal subgroup 
of $\Lambda$ 
is of finite index when (and precisely when) $H$ has the same property 
for its own {\it open} subgroups. A good illustration is that of 
$\Gamma = SL_n(\mathbb{Z})< \Lambda = SL_n(\mathbb{Z}[{\frac {1}{p}}])<  
SL_n(\mathbb{R})$. One can take here $H= SL_n(\mathbb{Q} _p)$ to deduce the
well known normal subgroup theorem for $\Lambda$ (the square integrability condition
requires that $n > 2$).

The restriction of $G$ not being a compact extension of abelian 
is necessary. 
Indeed, consider the chain 
$\Gamma = \mathbb{Z}^{n} < \Lambda = \mathrm{SO}(n,\mathbb{Q}) \ltimes (\mathbb{Q}[\sqrt{2}])^{n} < G = \mathrm{SO}(n,\mathbb{R}) \ltimes \mathbb{R}^{n}$.
Here $\Gamma<G$ sits co-compactly (in the abelian part) 
and the latter is ``just infinite'', having only one non-trivial 
normal subgroup -- $\mathbb{R}^{n}$ -- which
is co-compact. Hence the main condition of the Theorem is 
automatically satisfied for all $\Lambda <G$. However, the normal 
subgroup $N = \sqrt{2}~\mathbb{Q}^{n} \normal \Lambda$ (again embedded in
the abelian part of $\Lambda$) intersects $\Gamma$ only trivially.

It is perhaps worth remarking how our main Theorem serves as the natural commensurator analogue 
of the NST for lattices. Recall that in this theory counterpart 
results for homomorphisms of 
dense commensurators yield the same conclusion as those for higher rank lattices, under the crucial assumption that the image of the homomorphism, when restricted to the lattice, is
``large'' in an appropriate sense (depending on the setting in question). Our main theorem is equivalent to the statement that abstract group epimorphisms of $\Lambda$ 
with infinite kernel must have
finite image -- precisely as in the conclusion of the NST -- once their restriction to the lattice is ``large''. This ``largeness'' property should be interpreted as co-finiteness in our entirely abstract setting (see the proof of the Theorem in Section \ref{sec:proofmain}, where this equivalent
formulation is being established and used).

\subsection{Dense Commensurators}

By their nature, rigidity results for homomorphisms of commensurators 
are not as sharp
as those for higher rank lattices, being conditioned on the ``largeness'' of 
the restriction
of the homomorphism to the lattice. However, the advantage of this setting is
that to date 
there are
many more concrete examples of dense commensurators (in well understood groups
$G$, which are essentially simple). One could 
hope 
that our main theorem shall motivate further investigation of the
exotic locally compact groups $H=\rpf{\Lambda}{\Gamma}$ which
naturally arise in this setting. In some cases, this should lead to a full 
normal subgroup 
theorem for the commensurators. The outstanding example here is that when
$G$ is the full automorphism group of a regular tree. Then the full commensurator
of any uniform lattice is known to be dense \cite{Li94}, and in that case its simplicity is still an open question, 
first proposed by Lubotzky-Mozes-Zimmer \cite{LMZ94}.
Exploring the group $\rpf{\Lambda}{\Gamma}$ for such $\Lambda$, or even for 
``smaller'' (but still dense) $\Lambda$, would be of significant interest.
Other exotic settings where density of the commensurator is established 
and to which the Main Theorem applies appear in~\cite{haglund},\cite{remy},\cite{Hag08},\cite{kubena}. We remark that at least the ``amenability half'' of the 
Main Theorem 
applies in 
various known cases of non-uniform lattices with dense commensurators, which
are not finitely generated, as in \cite{abramenko}. 

\subsection{Irreducible Lattices in Products of Groups}

As mentioned earlier, our main theorem yields a sharpening 
of the NST for irreducible lattices in product of groups due to Bader and the 
second author \cite{BS05}, once one of the
ambient factors is totally disconnected. The resulting theorem 
becomes an ``if and only if'' statement:
\begin{corollary}\label{T:bs2}
Let $G$ be a locally compact, second countable compactly generated group, which is not 
a compact extension of an abelian group, and $H$ be any totally disconnected locally compact group.  
Let $\Lambda < G \times H$ be an irreducible (i.e.~has dense projections to the factors) co-compact discrete subgroup. Then each one of the following three conditions
is necessary, and combined together they are sufficient, for the
property that every infinite normal subgroup of $\Lambda$ has finite index:

\begin{enumerate}[{\normalfont (i)}]
\item The intersection of $\Lambda$ with $H$ is finite.
\item The group  $H$ has no infinite index open normal subgroups.
\item The projection of $\Lambda$ to $G$ intersects finitely its closed normal non-cocompact subgroups.
\end{enumerate}

\end{corollary}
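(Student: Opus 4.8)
The plan is to reduce the Corollary to the Main Theorem by passing from the product picture to the commensurator picture. Write $p_G,p_H$ for the two projections of $G\times H$; irreducibility means $p_G(\Lambda)$ and $p_H(\Lambda)$ are dense. Set $\bar\Lambda:=p_G(\Lambda)<G$, a dense subgroup. Fix a compact open subgroup $U<H$ (van Dantzig), and put $\Gamma:=p_G(\Lambda\cap(G\times U))$. The standard argument shows $\Lambda\cap(G\times U)$ is a cocompact lattice in $G\times U$ (its image in the compact quotient $\Lambda\backslash(G\times H)$ is clopen), whence $\Gamma$ is a discrete cocompact subgroup of $G$, infinite since $G$ is noncompact, and $\bar\Lambda$ commensurates $\Gamma$ (conjugating $U$ by $p_H(\lambda)$ replaces it by a commensurable compact open subgroup). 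Assuming (i), the finite group $F:=\Lambda\cap H$ is normalized by the dense $\Lambda$, hence $F\normal H$; replacing the pair $(\Lambda,H)$ by $(\Lambda/F,H/F)$—which affects neither condition (ii) nor (iii), nor the property that every infinite normal subgroup has finite index—I may assume $\Lambda\cap H=\{e\}$. Then $p_G|_\Lambda$ is injective and $\varphi:=p_H\circ(p_G|_\Lambda)^{-1}\colon\bar\Lambda\to H$ is a homomorphism with dense image, $\Gamma=\varphi^{-1}(U)$, and $K:=\overline{\varphi(\Gamma)}=U$ is compact open (as $\varphi(\bar\Lambda)\cap U$ is dense in $U$) with $\varphi^{-1}(K)=\Gamma$. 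This is exactly the data of the ``consequently'' clause of the Main Theorem, with $\bar\Lambda$ in the role of $\Lambda$, once condition (iii) is recorded—for (iii) is precisely the hypothesis that every closed normal non-cocompact subgroup of $G$ meets $\bar\Lambda=p_G(\Lambda)$ finitely.

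To prove sufficiency I would assume (i), (ii), (iii). By the above we are in the situation of the Main Theorem, which supplies a bijection between commensurability classes of infinite normal subgroups of $\bar\Lambda$ and of open normal subgroups of $H$. Condition (ii) says every open normal subgroup of $H$ has finite index, i.e.\ is commensurable to $H$, so the right-hand side is a single class. Hence so is the left-hand side; since $\bar\Lambda$ is itself an infinite normal subgroup of finite index, that single class is the class of finite-index subgroups, and therefore every infinite normal subgroup of $\bar\Lambda$ has finite index. As this property passes across the finite kernel of $p_G|_\Lambda$, the same holds for $\Lambda$.

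For necessity I would assume every infinite normal subgroup of $\Lambda$ has finite index and check each condition. For (i): $\Lambda\cap H=\ker(p_G|_\Lambda)\normal\Lambda$ has infinite index, since $\Lambda/(\Lambda\cap H)\cong\bar\Lambda$ is dense in the noncompact group $G$ and hence infinite; being normal of infinite index it cannot be infinite, so it is finite. For (iii): if a closed normal non-cocompact $M\normal G$ met $\bar\Lambda$ infinitely, then $N:=\Lambda\cap(M\times H)\normal\Lambda$ would be infinite, while $[\Lambda:N]=[\bar\Lambda:M\cap\bar\Lambda]$ is infinite because the image of $\bar\Lambda$ in the noncompact group $G/M$ is dense—contradicting the assumption. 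For (ii): if $H$ had an open normal subgroup $H_0$ of infinite index, choose a compact open $U\subseteq H_0$ and set $N:=\Lambda\cap(G\times H_0)\normal\Lambda$; then $N\supseteq\Lambda\cap(G\times U)$, a cocompact lattice in the noncompact group $G\times U$, so $N$ is infinite, while $\Lambda/N\cong p_H(\Lambda)H_0/H_0=H/H_0$ (density of $p_H(\Lambda)$ in the discrete quotient) is infinite—again a contradiction.

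The substantive content is carried entirely by the Main Theorem; the Corollary is a translation between the two pictures. Accordingly, I expect the only delicate points to be bookkeeping: verifying that $\Gamma$ is a genuine cocompact lattice commensurated by $\bar\Lambda$ and that $\overline{\varphi(\Gamma)}$ is compact open with $\varphi^{-1}(K)=\Gamma$, and carrying out the reduction to $\Lambda\cap H=\{e\}$ so that $\varphi$ is well defined. These are standard facts about irreducible lattices in products intersected with $G\times U$, so the main obstacle is conceptual rather than technical: recognizing that (iii) is exactly the Main Theorem's hypothesis on $\bar\Lambda$, and that (ii) is precisely what collapses the commensurability bijection to force finite index.
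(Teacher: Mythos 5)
Your proposal is correct and follows essentially the same route as the paper: reduce modulo the finite normal subgroup $\Lambda \cap H$, take $\Gamma$ to be (the projection of) $\Lambda \cap (G \times U)$ for $U < H$ compact open, and feed $\Gamma < p_G(\Lambda) < G$ together with $\varphi = p_H$ into the Main Theorem, with (iii) supplying its standing hypothesis and (ii) collapsing the commensurability bijection to the single class of finite-index subgroups. The only divergence is your necessity argument for (ii), which is more direct than the paper's (the paper first deduces that the offending open normal subgroup is finite and $H$ discrete before reaching a contradiction), but both are valid.
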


Of course, in typical applications of this result the normal subgroup 
structure of $G$ 
and $H$ would be so (simple and) well understood, that the verification of these
conditions is immediate.
A somewhat more technical version of the result when $\Lambda$ is non-uniform
is discussed following its proof, in Section \ref{S:bs2}.
Note that here (unlike \cite{BS05}), 
we do not assume 
that $H$ is compactly generated, and $\Lambda$ may not be finitely generated. 
This variant is relevant e.g.~in the adelic setting, giving a NST for groups of the form
${\bf G}(K)$ where ${\bf G}$ is a simple, simply connected algebraic
group defined over a global field $K$ - see the discussion in Section~\ref{S:bs2}.   
Finally, note that it is easy to turn the above 
result into a ``just infinite'' 
property of $\Lambda$
once the necessary ``no finite normal subgroups'' condition is imposed further on 
$G$ and $H$.

\subsection{On the Approach}

As indicated, the general strategy of the proof of the Main Theorem
follows the original one introduced by Margulis, and consists of two 
entirely independent ``halves'': one pertaining to property (T) and the other 
to amenability, of an appropriate quotient group, which together 
yield finiteness. The property (T) half rests on considerations regarding
reduced cohomology of unitary representations, primarily relying on
 results from \cite{Sha00}, as in the property (T) half of the normal
subgroup theorem for irreducible lattices. Here, however, the 
{\it irreducibility}
of the cohomological representation plays a crucial role, and an additional significant input is 
given by a result of 
Gelander-Karlsson-Margulis \cite{GKM08}.

A key notion in the proof of the amenability half is that of contractive (or 
SAT) group actions. A result of independent interest on which the proof is based is our Contractive Factor Theorem along the 
lines of Margulis' original Factor Theorem (used in the amenability half of 
his NST \cite{Ma79}), and of the Factor Theorem in \cite{BS05}:

\begin{theorem*}[The Contractive Factor Theorem -- Theorem \ref{T:factor}]
Let $G$ be a locally compact second countable group, $\Gamma$ a lattice in $G$, and $\Lambda<G$ a dense subgroup which contains and commensurates $\Gamma$.

Let $(X,\nu)$ be a probability $G$-space such that the restriction of the 
$G$-action to $\Gamma$ is contractive,
and let $(Y,\eta)$ be a $\Lambda$-space such that there exists a $\Gamma$-map $\varphi: (X,\nu) \to (Y,\eta)$.

Then the $\Lambda$-action on $(Y,\eta)$ extends measurably to $G$, in such a 
way that $\varphi$ becomes a $G$-map. More precisely, 
there exists a $G$-space $(Y^{\prime},\eta^{\prime})$, a $G$-map $\varphi^{\prime} : (X,\nu) \to (Y^{\prime},\eta^{\prime})$ and a $\Lambda$-isomorphism $\rho : (Y,\eta) \to (Y^{\prime},\eta^{\prime})$ such that $\varphi^{\prime} = \rho \circ \varphi$ a.e.

\end{theorem*}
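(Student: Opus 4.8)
The plan is to reformulate the statement in terms of a sub-$\sigma$-algebra of $X$ and to show that it is $G$-invariant. Let $\mathcal{B}_Y$ be the Borel $\sigma$-algebra of $Y$ and set $\mathcal{A} := \varphi^{-1}(\mathcal{B}_Y) \subseteq \mathcal{B}_X$, the $\Gamma$-invariant sub-$\sigma$-algebra of $X$ generated by the $\Gamma$-map $\varphi$. Since $\varphi$ realizes $(Y,\eta)$ as the factor of $(X,\nu)$ associated with $\mathcal{A}$, proving the theorem amounts to showing that $\mathcal{A}$ is invariant under all of $G$, rather than merely under $\Gamma$. Once this is established, I take $(Y',\eta')$ to be the $G$-factor of $(X,\nu)$ determined by $\mathcal{A}$, with $\varphi'$ the (automatically $G$-equivariant) factor map and $\rho$ the canonical measure-isomorphism between the two models $Y$ and $Y' = X/\mathcal{A}$ of the same quotient, so that $\varphi' = \rho\circ\varphi$. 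The argument then has three stages: upgrade $\Gamma$-invariance of $\mathcal{A}$ to $\Lambda$-invariance using commensuration and contractivity; upgrade $\Lambda$-invariance to $G$-invariance using density; and check that $\rho$ intertwines the given $\Lambda$-action on $Y$ with the $\Lambda$-action descending from $X$.

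For the first stage, fix $\lambda \in \Lambda$ and define $\varphi_\lambda(x) := \lambda\cdot\varphi(\lambda^{-1}x)$, using that $\lambda$ acts both on $X$ (as an element of $G$) and on $Y$ (as an element of $\Lambda$). A direct computation, relying precisely on the commensuration hypothesis that $\Gamma_\lambda := \Gamma \cap \lambda\Gamma\lambda^{-1}$ has finite index, shows that $\varphi_\lambda$ is a $\Gamma_\lambda$-map $X \to Y$; moreover $\varphi_\lambda^{-1}(\mathcal{B}_Y) = \lambda\mathcal{A}$, so it suffices to prove $\varphi_\lambda$ is $\mathcal{A}$-measurable and then apply the same reasoning to $\lambda^{-1}$ to get equality $\lambda\mathcal{A} = \mathcal{A}$. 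Here contractivity enters. Since contractivity passes to the finite-index subgroup $\Gamma_\lambda$, the $\Gamma_\lambda$-action on $(X,\nu)$ is still contractive, and I will invoke the defining feature of contractive actions: every $\Gamma_\lambda$-equivariant measurable map from $(X,\nu)$ into the space $\mathrm{Prob}(Y)$ of probability measures on the standard Borel $\Gamma_\lambda$-space $Y$ is almost surely a Dirac mass. Disintegrating $\nu$ over $\mathcal{A}$ as $\nu = \int \nu_y\,d\eta(y)$ and setting $\beta(x) := (\varphi_\lambda)_*\nu_{\varphi(x)}$, the conditional distribution of $\varphi_\lambda$ given $\mathcal{A}$, yields an $\mathcal{A}$-measurable, $\Gamma_\lambda$-equivariant map $X \to \mathrm{Prob}(Y)$. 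The Dirac property forces $\beta$ to be a point mass almost everywhere, which says exactly that $\varphi_\lambda$ is constant on $\nu_y$-almost every fiber of $\varphi$, i.e.\ $\varphi_\lambda$ is $\mathcal{A}$-measurable, as required.

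For the second stage, I pass to the measure algebra $\mathrm{MAlg}(X,\nu)$, on which $G$ acts continuously by $[A]\mapsto[gA]$; the subalgebra corresponding to $\mathcal{A}$ is closed, and since the action is continuous the stabilizer $\{g \in G : g\mathcal{A} = \mathcal{A}\}$ is a closed subgroup (if $g_n \to g$ preserve $\mathcal{A}$ then $\nu(g_n A \triangle gA)\to 0$ and closedness of $\mathcal{A}$ gives $gA \in \mathcal{A}$). By the first stage this subgroup contains $\Lambda$, which is dense in $G$, so it is all of $G$; working in the measure algebra rather than an $L^2$-model is deliberate, to avoid the Radon--Nikodym cocycle issues that would otherwise obstruct the correspondence between invariance of $\mathcal{A}$ and invariance of a closed subspace. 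Having $\mathcal{A}$ be $G$-invariant, I construct $(Y',\eta',\varphi')$ as above and verify that $\rho$ is a $\Lambda$-isomorphism: the $\Lambda$-action on $Y' = X/\mathcal{A}$ descends from $X$, and the rigidity analysis, showing that the self-joining $(\varphi,\varphi_\lambda)_*\nu$ is supported on a graph which contractivity pins down, should identify this descended action with the prescribed action on $Y$ through $\rho$.

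I expect the genuine obstacle to be this last compatibility inside the first stage: the self-joining analysis only produces $\varphi_\lambda$ as some measurable function of $\varphi$, and ruling out a nontrivial intervening $\Gamma_\lambda$-equivariant automorphism of $(Y,\eta)$, so that $\rho$ genuinely intertwines the given $\Lambda$-action and not a twisted version of it, is the delicate point where the full strength of the contractive hypothesis, and the uniqueness it confers on equivariant maps out of $X$, must be exploited. The density step, by contrast, is soft once phrased in the measure algebra, and the final assembly of $(Y',\eta',\rho)$ is routine.
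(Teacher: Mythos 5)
Your overall architecture (define $\varphi_\lambda$, use contractivity plus commensuration to handle each $\lambda\in\Lambda$, then use density of $\Lambda$ and point realization to extend to $G$) is the same as the paper's, and your second and third stages are sound. But the first stage, which carries all the weight, rests on a fact that is not the ``defining feature'' of contractive actions and is false as you state it: it is not true that every $\Gamma_\lambda$-equivariant measurable map $(X,\nu)\to\mathrm{Prob}(Y)$ is almost surely Dirac. If $Y$ carries a $\Gamma_\lambda$-invariant probability measure, the constant map to that measure is equivariant and non-Dirac; more generally one can compose $\varphi$ with any non-Dirac equivariant map $Y\to\mathrm{Prob}(Y)$. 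What is actually true, and what the paper isolates as Proposition \ref{P:contractivemapunique}, is a \emph{uniqueness} statement: two equivariant maps out of a contractive space into the same target, whose image measures lie in a single measure class, must agree almost everywhere. The proof goes through a continuous compact model (Lemma \ref{L:compactmodels}) and the Furstenberg--Glasner contractibility: for each $x_0$ one finds $g_n$ with $g_n\nu\to\delta_{x_0}$, pushes forward to get $g_n\eta\to\delta_{\varphi(x_0)}$ and $g_n\eta'\to\delta_{\varphi_\lambda(x_0)}$, and uses the measure-class Lemma \ref{L:contractivemeasureclass} to force $\varphi_\lambda(x_0)=\varphi(x_0)$. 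The measure-class hypothesis (that $(\varphi_\lambda)_*\nu$ is equivalent to $\eta$) is itself a small verification you omit, done in the paper via quasi-invariance.

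This stronger conclusion --- $\varphi_\lambda=\varphi$ a.e., not merely that $\varphi_\lambda$ is $\mathcal{A}$-measurable --- is exactly what dissolves the obstacle you flag at the end. You correctly identify that showing $\varphi_\lambda$ factors through $\varphi$ leaves open a possible ``twist'' by an equivariant automorphism of $(Y,\eta)$, so that $\rho$ might intertwine the wrong $\Lambda$-action; but you do not resolve this, and your Dirac-mass route cannot resolve it, since $\mathcal{A}$-measurability is genuinely weaker than equality of maps. (Your conditional-measure map $\beta(x)=(\varphi_\lambda)_*\nu_{\varphi(x)}$ also fails to be strictly equivariant when the disintegration is only quasi-invariant --- there are Radon--Nikodym corrections on the fibers --- which is another sign that the detour through $\mathrm{Prob}(Y)$ is more delicate than needed.) Once one has $\lambda^{-1}\varphi(\lambda x)=\varphi(x)$ a.e.\ for every $\lambda$, the map $\varphi$ is already a $\Lambda$-map on the nose, $L^\infty(Y,\eta)$ sits inside $L^\infty(X,\nu)$ as a $\Lambda$-invariant (hence, by density, $G$-invariant) subalgebra, and Mackey point realization finishes the proof exactly as in your stages two and three. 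So the missing ingredient is the uniqueness proposition for maps out of contractive spaces; with it, your plan closes, and without it, it does not.
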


Contractive spaces, introduced by Jaworski \cite{Ja94}, \cite{Ja95} under the name strongly approximately transitive (SAT), are the extreme opposite of measure-preserving: $G$ acts on $(X,\nu)$ in such a way that for any measurable set $B$ that is not conull there exists a sequence $\{ g_{n} \}$ in $G$ along which $\nu(g_{n}B) \to 0$.  Jaworski introduced this property to study the Choquet-Deny property on groups and showed that Poisson boundaries are contractive.

The topological analogue of this measurable phenomenon is the following (see Furstenberg and Glasner \cite{FG10}): a continuous action of a group $G$ on a compact metric space $X$ with a quasi-invariant Borel probability measure $\nu$ on $X$ is contractible when for every $x \in X$ there exists $g_{n} \in G$ such that $g_{n}\nu \to \delta_{x}$ weakly.  In \cite{FG10} it is shown that an action 
is measurably contractive if and only if every continuous compact model of it is contractible; it is natural to refer to $G$-spaces for which all models are contractible as contractive and for this reason we adopt the somewhat more descriptive terminology ``contractive''.
It seems that this interesting notion deserves considerably more 
attention; see also the recent \cite{CP12}.

\section{Contractive Actions}

Contractive actions, introduced by Jaworski \cite{Ja94}, \cite{Ja95}, with the idea going back to \cite{Ja91}, have been studied by Kaimanovich \cite{kaimanovichSAT} and by Furstenberg and Glasner \cite{FG10}.  Our aim here is to prove the Contractive Factor Theorem (Theorem \ref{T:factor}) which plays the role of Margulis' Factor Theorem for boundary actions of semisimple groups.

In this paper all probability measure spaces are assumed standard. 
Recall that such a space $(X,\nu)$ is a \textbf{$G$-space} 
when $G$ acts measurably on (a measure one subset of) $X$ such that 
$\nu$ is quasi-invariant under the $G$-action (meaning the measure class is preserved).  A measurable map $\varphi: (X,\nu) \to (Y, \eta)$ between $G$-spaces is a \textbf{$G$-map} when $\varphi$ is $G$-equivariant ($\varphi(gx) = g\varphi(x)$ for all $g \in G$ and $\nu$-almost every $x \in X$), and $\varphi_{*}\nu = \eta$.

\begin{definition}[Jaworski \cite{Ja94}]
A $G$-space $(X,\nu)$ is \textbf{contractive}, also called \textbf{SAT (strongly approximately transitive)}, when for every measurable $B \subseteq X$ with $\nu(B) < 1$ and every $\epsilon > 0$ there exists $g \in G$ such that $\nu(gB) < \epsilon$.
\end{definition}

The primary source of examples of contractive spaces is (quotients of) 
Poisson boundaries, and in the case of measure classes admitting a stationary measure we 
are not familiar with others.

\begin{definition}[Furstenberg-Glasner \cite{FG10}]
Let $G$ be a locally compact second countable group acting continuously on a compact metric space $X$, and let $\nu$ be a $G$-quasi-invariant Borel probability measure on $X$.  The action is \textbf{contractible} if 
for every $x \in X$ there exists a sequence $g_{n} \in G$ such that 
$\lim_{n} g_n\nu = \delta_{x}$ weakly  (where $\delta_{x}=$ point mass at $x$).
\end{definition}

\begin{theorem*}[Furstenberg-Glasner \cite{FG10}]
The action on a probability space is contractive if and only if every continuous compact model of it is contractible.
\end{theorem*}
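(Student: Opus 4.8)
The plan is to reduce the equivalence, in each direction, to a statement about a single well-chosen model, using three standard facts. First, every $G$-space $(X,\nu)$ admits a continuous compact model; we always take such models to have full support $X=\mathrm{supp}\,\nu$ (this convention is genuinely needed, since an isolated point lying off the support could never be approached by any $g_n\nu$, and so would break contractibility even though the space is contractive). Second, contractivity is manifestly an isomorphism invariant of the $G$-space, hence may be tested in any model. Third, writing $g_*\nu$ for the pushforward, the defining condition rephrases conveniently: $(X,\nu)$ is contractive if and only if for every measurable $A$ with $\nu(A)>0$ and every $\epsilon>0$ there is $g\in G$ with $(g_*\nu)(A)>1-\epsilon$ (put $A=B^c$, $h=g^{-1}$, and use $\nu(gB)=(g^{-1}_*\nu)(B)$ together with bijectivity of $g$).

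For the direction contractive $\Rightarrow$ every model contractible, I would fix a full-support continuous compact model $(X',\nu')$ and a point $x\in X'$. Since $x\in\mathrm{supp}\,\nu'$, every metric ball $B(x,1/k)$ has positive measure, so the reformulated contractivity (applied with $A=B(x,1/k)$ and $\epsilon=1/k$) produces $g_k\in G$ with $((g_k)_*\nu')(B(x,1/k))>1-1/k$. These measures place all but vanishing mass in shrinking balls about $x$, whence $(g_k)_*\nu'\to\delta_x$ weakly; as $x$ was arbitrary, the model is contractible.

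The substantive direction is every model contractible $\Rightarrow$ contractive, and here I would exploit the freedom to choose the model. Given measurable $B$ with $\nu(B)<1$ and $\epsilon>0$, the obstacle is that $B$ may be topologically pathological (e.g.\ dense, with a non-null complement of empty interior), so that weak convergence to point masses controls nothing about $\nu(gB)$. I would remove this obstacle by manufacturing a continuous compact model $(X',\nu')$, with $G$-isomorphism $\theta\colon(X,\nu)\to(X',\nu')$, in which $B':=\theta(B)$ is \emph{clopen}: one takes the Gelfand spectrum of a separable, $G$-invariant, unital $C^*$-subalgebra of $L^\infty(X,\nu)$ containing $\bbone_B$ together with a countable family generating the Borel $\sigma$-algebra mod null, so that $\bbone_{B'}\in C(X')$ and $B'$ is clopen. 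By hypothesis this model is contractible. Since $\nu'(B')=\nu(B)<1$, the clopen set $(B')^c$ is nonempty; choosing $x\in(B')^c$ and a sequence with $(g_n)_*\nu'\to\delta_x$, continuity of $\bbone_{B'}$ gives $((g_n)_*\nu')(B')\to\bbone_{B'}(x)=0$. Hence $\nu'(g_n^{-1}B')<\epsilon$ for large $n$, and transporting through $\theta$ yields $\nu(g_n^{-1}B)<\epsilon$, establishing contractivity.

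The hard part, and the technical heart of the argument, is precisely this construction: one must produce a \emph{metrizable} compact model carrying a genuine continuous $G$-action in which $B$ becomes clopen. The tension is between separability of the generating $C^*$-algebra and its invariance under the whole (uncountable) group $G$, which I would reconcile using second countability of $G$, so that a countable dense set of translates generates an algebra whose spectrum still supports the full $G$-action. It is here that the hypothesis is used at full strength—\emph{every} compact model is contractible—since contractibility is applied not to a prescribed model but to one tailored to the set $B$ under scrutiny. The remaining points (that $\theta$ intertwines the actions and pushes $\nu$ to $\nu'$, and that the full-support convention in the easy direction is harmless) are routine.
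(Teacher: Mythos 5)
The paper does not actually prove this statement---it is imported wholesale from Furstenberg--Glasner \cite{FG10} and used as a black box---so there is no internal proof to compare against; I can only assess your argument on its own terms. Your easy direction (contractive $\Rightarrow$ every full-support model contractible) is correct, and your observation that points off the support can never be weak-* limits of $g_n\nu$ (by quasi-invariance, $\nu(U)=0$ forces $g_n\nu(U)=0$ for all $n$) rightly identifies why full support must be built into the notion of ``model,'' consistent with how Lemma \ref{L:compactmodels} produces its models.

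The hard direction, however, has a fatal gap: the construction you defer to as the ``technical heart''---a continuous compact metric model in which a prescribed measurable set $B$ becomes clopen---is impossible in general. If $X'$ is compact with a \emph{jointly} continuous $G$-action and $B'\subseteq X'$ is clopen, then for each $x'\in X'$ the map $g\mapsto \bbone_{B'}(gx')$ is a continuous $\{0,1\}$-valued function on $G$, hence constant on the identity component $G^{0}$; so $B'$ is exactly $G^{0}$-invariant, and $B$ is $G^{0}$-invariant mod null. Equivalently, on the $C^*$-algebra side: joint continuity of the action on a compact space forces the $G$-action on $C(X')$ to be norm-continuous, whereas $\|g\bbone_{B}-\bbone_{B}\|_{\infty}\in\{0,1\}$, so $\bbone_{B}$ is a norm-continuity vector only when $\nu(gB\symdiff B)=0$ for all $g$ in a neighborhood of $e$, i.e.\ when $B$ is invariant under an open subgroup. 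For connected $G$ (e.g.\ $G=\SLnbb{R}$, the motivating example of the paper) and an ergodic action, only null and conull sets can ever be made clopen, so your tailored model does not exist precisely for the sets $B$ one cares about. This is not a repairable separability issue of the kind you flag; it is a structural obstruction. Nor does weakening ``clopen'' to ``closed mod null'' and invoking the portmanteau theorem obviously rescue the argument, since inner regularity only approximates $B$ by closed sets up to sets of small $\nu$-measure, and quasi-invariance gives no quantitative control of $g\nu$ on those exceptional sets. The implication ``every model contractible $\Rightarrow$ contractive'' genuinely requires a different mechanism (this is where \cite{FG10} does real work), and your write-up does not supply one.
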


\subsection{Lattices Act Contractively}

\begin{proposition}\label{P:cocptlattcontractive}
Let $G$ be a locally compact second countable group and $\Gamma$ a co-compact 
subgroup of $G$.  Let $(X,\nu)$ be a contractive $G$-space.  Then restricting the action to $\Gamma$ makes $(X,\nu)$ a contractive $\Gamma$-space.
\end{proposition}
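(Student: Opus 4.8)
The plan is to verify the measurable definition of contractivity directly: fixing a measurable $B \subseteq X$ with $\nu(B) < 1$ and an $\epsilon > 0$, I must produce some $\gamma \in \Gamma$ with $\nu(\gamma B) < \epsilon$. The only hypothesis relating $\Gamma$ to $G$ is co-compactness, so I would first record that there is a compact set $K \subseteq G$ with $G = K\Gamma$ (take a relatively compact open set whose image covers the compact quotient $G/\Gamma$, and let $K$ be its closure). Since the $G$-action is contractive, $\inf_{g \in G} \nu(gB) = 0$, so I can choose a sequence $g_n \in G$ with $\nu(g_n B) \to 0$. The whole point will be to ``push'' these contracting elements into $\Gamma$ at the cost of a compact error that the quasi-invariant measure cannot see in the limit.

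Write $g_n = k_n \gamma_n$ with $k_n \in K$ and $\gamma_n \in \Gamma$. Then $\gamma_n = k_n^{-1} g_n$, so $\gamma_n B = k_n^{-1}(g_n B)$, and the defining property of the pushforward gives the key identity
\[
\nu(\gamma_n B) \;=\; \nu\bigl(k_n^{-1}(g_n B)\bigr) \;=\; (k_{n*}\nu)(g_n B),
\]
where $k_{n*}\nu$ denotes the pushforward measure $E \mapsto \nu(k_n^{-1} E)$. Thus the problem is reduced to showing that the sets $g_n B$, which have vanishing $\nu$-mass, also have vanishing mass for the translated measures $k_{n*}\nu$. Were the $k_n$ a single fixed $k$, this would be immediate: $k_*\nu \ll \nu$ by quasi-invariance, and a finite measure absolutely continuous with respect to $\nu$ assigns vanishing mass to any sequence of sets whose $\nu$-mass vanishes. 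The difficulty is precisely that the compact errors $k_n$ vary with $n$.

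This is the main obstacle, and I would resolve it by \emph{uniform} absolute continuity over the compact set $K$. The crucial analytic input is the standard fact that, for a measurable quasi-invariant action of a locally compact second countable group, the map $g \mapsto g_*\nu$ is continuous from $G$ into the probability measures equipped with the total-variation norm; equivalently, the Radon--Nikodym cocycle $g \mapsto d(g_*\nu)/d\nu$ is continuous into $L^1(\nu)$ (a consequence of the strong continuity of the associated Koopman representation on $L^2(X,\nu)$). Granting this, $\{k_*\nu : k \in K\}$ is a total-variation compact family of measures, each absolutely continuous with respect to $\nu$, and a routine finite-cover argument upgrades this to uniform absolute continuity: for every $\epsilon > 0$ there is $\delta > 0$ with $(k_*\nu)(D) < \epsilon$ whenever $\nu(D) < \delta$, simultaneously for all $k \in K$.

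With uniform absolute continuity in hand the proof closes immediately: choosing $\delta$ for the given $\epsilon$ and then $n$ large enough that $\nu(g_n B) < \delta$, the key identity yields $\nu(\gamma_n B) = (k_{n*}\nu)(g_n B) < \epsilon$, so $\gamma = \gamma_n \in \Gamma$ is the desired element. (If one prefers to avoid the uniform statement, one may instead pass to a subsequence along which $k_n \to k \in K$, split $(k_{n*}\nu)(g_n B) \le (k_*\nu)(g_n B) + \|k_{n*}\nu - k_*\nu\|_{\mathrm{TV}}$, and send both terms to $0$ using single-measure absolute continuity for the first and total-variation continuity for the second.) I expect the total-variation continuity of $g \mapsto g_*\nu$ to be the only genuinely analytic point; everything else is bookkeeping with co-compactness and the definition of the pushforward.
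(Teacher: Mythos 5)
Your proof is correct and takes essentially the same approach as the paper's: decompose $g_n = k_n\gamma_n$ using a compact set $K$ with $K\Gamma = G$, and then absorb the compact factor using continuity of the $G$-action on measures. The paper handles that last step by passing to a convergent subsequence $k_{n_j}\to k_\infty$ and invoking a ``standard fact'' left to the reader; your uniform absolute continuity over $K$ (and the parenthetical subsequence variant) is a correct and complete way of supplying exactly that detail.
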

\begin{proof}
Let $K \subseteq G$ be a compact set such that $K \Gamma = G$.  Let $B \subseteq X$ such that $\nu(B) < 1$.  Then there exists $g_{n} \in G$ such that $\nu(g_{n}B) \to 0$ since the $G$-action is contractive.  Write $g_{n} = k_{n} \gamma_{n}$ for $k_{n} \in K$ and $\gamma_{n} \in \Gamma$.  Since $K$ is compact there is a convergent subsequence $k_{n_{j}} \to k_{\infty}$.
It is a standard fact (the proof is left to the reader) that if $A_{j}$ is a sequence of measurable sets such that $\nu(A_{j}) \to 0$ and $\ell_{j} \to \ell_{\infty}$ is a convergent sequence then $\nu(\ell_{j}A_{j}) \to 0$.  Applying this to
$A_{j} = g_{n_{j}}B$ and $\ell_{j} = k_{n_{j}}^{-1}$ gives $\nu(\gamma_{n_{j}}B) = \nu(k_{n_{j}}^{-1}g_{n_{j}}B) \to 0$.
\end{proof}

The case of non-uniform lattices turns out to be more difficult, but for our purposes it will suffice to establish the result for actions on (quotients of) the 
Poisson boundary of the ambient group.  However, the first author and J.~Peterson \cite{CP12} have, subsequent to our work, shown that the existence of a stationary measure in the class suffices: if $\Gamma <G$ is a lattice, and $G$ acts contractively on a stationary $G$-space, then so does $\Gamma$. The general case remains conjectured.

\begin{proposition}\label{P:latticecontractive}
Let $G$ be a locally compact second countable group and $\Gamma$ a lattice in $G$.  Then the action of $\Gamma$ on any (quotient of the) Poisson boundary of $G$ relative to a symmetric measure with support generating $G$ is contractive.
\end{proposition}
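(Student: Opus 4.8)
The plan is to deduce the non-uniform case from Jaworski's theorem that Poisson boundaries are contractive, by realizing the $G$-boundary as a $\Gamma$-boundary. Fix the symmetric generating measure $\mu$ and write $(B,\nu)$ for its Poisson boundary. It is worth first seeing why Proposition \ref{P:cocptlattcontractive} does not apply: its proof absorbed the $K$-part of an arbitrary $g_n$ using a \emph{compact} set $K$ with $K\Gamma=G$, and for a non-uniform lattice no such $K$ exists, so a genuinely different mechanism is needed.

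That mechanism is the discretization of the random walk, due to Furstenberg and, in the present generality, Kaimanovich. Since $\Gamma$ is a lattice, $G/\Gamma$ carries a finite $G$-invariant measure, so for a spread-out $\mu$ the $\mu$-random walk on $G$ projects to a recurrent walk on $G/\Gamma$; recording the successive returns to a fixed fundamental domain produces a probability measure $\bar\mu$ on $\Gamma$ for which $\nu$ is $\bar\mu$-stationary and $(B,\nu)$, equipped with the restriction to $\Gamma$ of the $G$-action, is exactly the Poisson boundary of $(\Gamma,\bar\mu)$. Granting this identification, Jaworski's theorem applied to the discrete pair $(\Gamma,\bar\mu)$ shows at once that the $\Gamma$-action on $(B,\nu)$ is contractive.

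The passage to a quotient is then the only fully elementary step, and I would record it separately. If $\varphi\colon (B,\nu)\to (Y,\eta)$ is a $\Gamma$-factor map and $C\subseteq Y$ has $\eta(C)<1$, then $\nu(\varphi^{-1}C)=\eta(C)<1$, so contractivity of $\Gamma$ on $B$ furnishes $\gamma_n\in\Gamma$ with $\nu(\gamma_n\varphi^{-1}C)\to 0$; by equivariance $\gamma_n\varphi^{-1}C=\varphi^{-1}(\gamma_n C)$, whence $\eta(\gamma_n C)\to 0$. Thus every $\Gamma$-factor of a contractive $\Gamma$-space is again contractive, which handles the ``quotient of'' clause.

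The entire difficulty sits in the first step, and I expect the main obstacle to be establishing and correctly invoking the discretization: one must arrange that $\mu$ is spread out so that $\bar\mu$ is well defined, and, more delicately, verify that the discretized boundary is identified with $(B,\nu)$ \emph{as a $\Gamma$-space in the given measure class}, not merely as an abstract boundary. Symmetry of $\mu$ enters only through adaptedness --- for symmetric $\mu$ the semigroup generated by $\mathrm{supp}\,\mu$ equals the group it generates, namely $G$, which is the standing hypothesis of boundary theory --- and otherwise the operative conditions are that $\mu$ generates and is spread out; I would impose the latter when choosing $\mu$, or reduce to an admissible $\mu$ at the outset. This is precisely why the statement is confined to (quotients of) the Poisson boundary: once the $G$-boundary is recognized as a $\Gamma$-boundary, contractivity is automatic, whereas a general stationary $G$-space admits no such discretization and genuinely needs the later argument of the first author and Peterson.
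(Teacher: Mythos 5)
Your route is genuinely different from the paper's and is viable, but it outsources the entire content to the discretization theorem, whereas the paper gives a self-contained argument. The paper does not realize $(B,\nu)$ as a $\Gamma$-boundary at all: it uses the martingale convergence of $\omega_1\cdots\omega_n\nu$ to point masses, applies Kakutani's random ergodic theorem to the $\mu$-walk on $G/\Gamma$ to find, for almost every trajectory, infinitely many times $n_j$ at which $\omega_{n_j}^{-1}\cdots\omega_1^{-1}z$ lies in a fixed bounded set $K\subseteq G/\Gamma$ (this is where symmetry of $\mu$ is used, to pass from forward to inverse products), writes $\omega_{n_j}^{-1}\cdots\omega_1^{-1}z_0=k_j\gamma_j$ with $k_j$ in a fixed bounded subset of $G$, and then absorbs the compact part exactly as in Proposition \ref{P:cocptlattcontractive} to extract $\gamma_j\in\Gamma$ with $\nu(\gamma_j B)\to 0$. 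So the mechanism you say is unavailable in the non-uniform case (absorbing a precompact correction) is in fact the paper's mechanism, applied along recurrence times of the walk rather than to an arbitrary contracting sequence. What your approach buys is conceptual economy, at the price of two things you correctly flag but do not resolve: (i) the discretization of Furstenberg--Lyons--Sullivan--Kaimanovich requires $\mu$ to be spread out, while the Proposition (and its use in Proposition \ref{P:amenhalf}) is stated for an arbitrary symmetric generating $\mu$, and one cannot simply ``reduce to an admissible $\mu$'' since changing $\mu$ changes the boundary; the paper's argument needs no spread-out hypothesis; and (ii) one must verify the identification of $(B,\nu)$ with the $(\Gamma,\bar\mu)$-boundary as a $\Gamma$-space in the given measure class, which is true but is precisely the nontrivial input. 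Your reduction of the ``quotient'' clause to the fact that $\Gamma$-factors of contractive $\Gamma$-spaces are contractive is correct and cleaner than leaving it implicit.
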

\begin{proof}
Let $(X,\nu)$ be any compact model of (a quotient of) the Poisson boundary for $(G,\mu)$, where $\mu$ is a symmetric Borel probability measure on $G$ with support generating $G$.  The key feature of the Poisson boundary of use to us is that for any 
$L^{\infty}$-function $f$ on $X$ (and in particular for any continuous $f$), 
the maps $\varphi_{n} : G^{\mathbb{N}} \to \mathbb{R}$ defined by $\varphi_{n}(\omega_{1},\omega_{2},\ldots) = \int f(\omega_{1}\cdots\omega_{n} x)~d\nu(x)$ form a martingale (by the stationarity of $\nu$) and hence converge $\mu^{\mathbb{N}}$-almost surely.  Therefore, for $\mu^{\mathbb{N}}$-almost every $\omega \in G^{\mathbb{N}}$ the measures $\omega_{1}\cdots\omega_{n}\nu$ converge weakly to some limit measure $\nu_{\omega}$.  In fact, $\nu_{\omega}$ is a point mass almost surely (which characterizes quotients of the Poisson boundary), 
 thus showing that a boundary action is contractive.  The stationarity of $\nu$ implies that for any measurable set $A$, $\int \nu_{\omega}(A)~d\mu^{\mathbb{N}}(\omega) = \nu(A)$, and therefore $\mu^{\mathbb{N}} \{ \omega : \nu_{\omega}(A) = 1 \} = \nu(A)$.

Let $m$ be the Haar measure on $G / \Gamma$ normalized to be a probability measure.  Recall that Kakutani's Random Ergodic Theorem \cite{kakutani} 
states that for any $f \in L^{\infty}(G / \Gamma)$, $m$-almost every $z \in G / \Gamma$ and $\mu^{\mathbb{N}}$-almost every $\omega \in G^{\mathbb{N}}$,
\[
\lim_{N \to \infty} \frac{1}{N} \sum_{n=1}^{N} f(\omega_{n}\cdots\omega_{1}z) = \int f d\nu.
\]
Let $K_{0}$ be any open bounded subset of $G$. 
Write $K$ for the finite (positive) measure subset of $G / \Gamma$ 
which is the 
image of $K_{0}$ under the quotient map. Let $\mathbbm{1}_{K}$ be the characteristic function of $K$.  By the Random Ergodic Theorem, for $m$-almost every $z$ and $\mu^{\mathbb{N}}$-almost every $\omega \in G^{\mathbb{N}}$,
\[
\lim_{N\to\infty} \frac{1}{N}\sum_{n=1}^{N} \mathbbm{1}_{K}(\omega_{n}\cdots\omega_{1} z) = m(K) > 0.
\]
Pick $z \in G / \Gamma$ such that the above holds for $\mu^{\mathbb{N}}$-almost every $\omega$.  Then $\omega_{n}\cdots \omega_{1} z \in K$ infinitely often $\mu^{\mathbb{N}}$-almost surely.  As $\mu$ is symmetric, also $\omega_{n}^{-1}\cdots\omega_{1}^{-1}z \in K$ infinitely often $\mu^{\mathbb{N}}$-almost surely.

Fix an arbitrary measurable set $B \subseteq X$ with $\nu(B) < 1$.  Let $z_{0} \in G$ be a representative of $z \in G / \Gamma$.  Set $A = z_{0}B$ and note that $\nu(A) < 1$ (by quasi-invariance).  As recalled above,
\[
\mu^{\mathbb{N}} \{ \omega : \nu_{\omega}(A) = 0 \} = 1 - \nu(A) > 0.
\]
Pick $\omega$ such that $\nu_{\omega}(A) = 0$ and $\omega_{n}^{-1} \cdots \omega_{1}^{-1}z \in K$ infinitely often 
(the intersection of a positive measure set with a full measure set is nonempty).  Let $\{ n_{j} \}$ be the times such that $\omega_{n_{j}}^{-1}\cdots\omega_{1}^{-1}z \in K$ (which is happening in $G / \Gamma$).  Then
\[
0 = \nu_{\omega}(A) = \lim_{n} \omega_{1}\cdots\omega_{n}\nu(A) = \lim_{j\to\infty} \nu(\omega_{n_{j}}^{-1}\cdots\omega_{1}^{-1}A) = \lim_{j\to\infty} \nu(\omega_{n_{j}}^{-1}\cdots\omega_{1}^{-1}z_{0}B)
\]
and $\omega_{n_{j}}^{-1}\cdots\omega_{1}^{-1}z_{0} \in K_{0}\Gamma$ for each $j$ (since $z_{0} \in z\Gamma$ and $\omega_{n_{j}}^{-1}\cdots\omega_{1}^{-1}z \in K$).

Write $\omega_{n_{j}}^{-1}\cdots\omega_{1}^{-1}z_{0} = k_{j}\gamma_{j}$ for $k_{j} \in K_{0}$ and $\gamma_{j} \in \Gamma$.  Then
$\lim_{j\to\infty} \nu(k_{j} \gamma_{j} B) = 0$.
Choose a subsequence $j_{\ell}$ such that $k_{j_{\ell}} \to k_{\infty}$ for some $k_{\infty} \in K$ and set $B_{\ell} = k_{j_{\ell}} \gamma_{j_{\ell}} B$.  Then $\nu(B_{\ell}) \to 0$ and $k_{j_{\ell}}^{-1} \to k_{\infty}^{-1}$, so, 
$\nu(k_{j_{\ell}}^{-1} B_{\ell}) \to 0$ (an easy exercise on the continuity
of the $G$-action on $L^1$).  Hence
$\lim_{\ell \to \infty} \nu(\gamma_{j_{\ell}} B) = 0$, and the $\Gamma$-action is contractive.
\end{proof}

\subsection{Uniqueness of Quotients of Contractive Spaces}

We begin by observing the following simple basic fact:
\begin{lemma}\label{L:contractivemeasureclass}
Let $(X,\nu)$ be a $G$-space and $\nu^{\prime}$ a Borel probability measure on $X$ in the same measure class as $\nu$.  Let $\{ g_{n} \}$ be a sequence in $G$ and $B \subseteq X$ a measurable set.  If $\nu(g_{n}B) \to 0$ then $\nu^{\prime}(g_{n}B) \to 0$.  In particular, if $(X,\nu)$ is contractive then so is $(X,\nu^{\prime})$.
\end{lemma}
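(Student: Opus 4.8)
The plan is to prove Lemma \ref{L:contractivemeasureclass}, which asserts that the contractive property depends only on the measure class, not on the representative measure. The statement has two parts: first, that $\nu(g_n B) \to 0$ implies $\nu'(g_n B) \to 0$ for equivalent measures $\nu, \nu'$; and second, the ``in particular'' consequence that contractivity transfers to $\nu'$. The second part is an immediate formal consequence of the first, so essentially all the work is in the convergence claim.

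First I would record the setup from absolute continuity. Since $\nu'$ is in the same measure class as $\nu$, we may write $d\nu' = h\, d\nu$ for a Radon--Nikodym density $h \in L^1(X,\nu)$ with $h \geq 0$ and $\int_X h\, d\nu = 1$. The goal is then to show $\nu'(g_n B) = \int_{g_n B} h\, d\nu \to 0$ under the hypothesis $\nu(g_n B) \to 0$. The natural approach is a uniform-integrability argument: because $h \in L^1(\nu)$, for every $\epsilon > 0$ there is a $\delta > 0$ such that every measurable set $E$ with $\nu(E) < \delta$ satisfies $\int_E h\, d\nu < \epsilon$. I would prove this absolute continuity of the integral either directly via the standard truncation $h = \min(h, M) + (h - M)^{+}$ (choosing $M$ large so the tail has small $\nu'$-mass, then $\delta = \epsilon/(2M)$), or simply cite it as the classical statement that integrals are absolutely continuous with respect to the underlying measure. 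Applying this with $E = g_n B$, and using that $\nu(g_n B) \to 0$ eventually forces $\nu(g_n B) < \delta$, yields $\nu'(g_n B) < \epsilon$ for all large $n$, which is precisely the desired conclusion.

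For the ``in particular'' clause, I would argue that contractivity of $(X,\nu')$ follows formally. Fix any measurable $B$ with $\nu'(B) < 1$ and any $\epsilon > 0$. Since $\nu$ and $\nu'$ are equivalent, $\nu'(B) < 1$ forces $\nu(B) < 1$ as well (equivalent measures agree on which sets are conull). By contractivity of $(X,\nu)$ there is a sequence $g_n$ with $\nu(g_n B) \to 0$; the first part then gives $\nu'(g_n B) \to 0$, so in particular some $g = g_n$ achieves $\nu'(g B) < \epsilon$. This verifies the definition of contractivity for $(X,\nu')$.

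The only genuine subtlety — and thus the main point to get right rather than a real obstacle — is the uniform-integrability step. It is tempting but wrong to treat each $g_n B$ in isolation and conclude pointwise; one must use that a single fixed density $h$ controls all the sets simultaneously through absolute continuity of the integral. Once that is invoked correctly, the argument is purely measure-theoretic and requires no hypothesis on the $G$-action beyond the bookkeeping already built into the notion of a $G$-space (quasi-invariance, so that $g_n B$ is measurable and both measures are defined on it). No compactness, group structure, or properties of the specific sequence $g_n$ enter, which is why the lemma holds in this clean generality.
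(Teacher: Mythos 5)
Your proof is correct, but it takes a different route from the paper's. The paper argues by contradiction without ever invoking the Radon--Nikodym theorem: assuming $\limsup \nu'(g_nB)=\delta>0$, it extracts a subsequence with $\nu(g_{n_{j_t}}B)<2^{-t}$, forms the decreasing tail sets $B_k=\bigcup_{t\geq k}g_{n_{j_t}}B$, and observes that $\nu\bigl(\bigcap_k B_k\bigr)=0$ while $\nu'\bigl(\bigcap_k B_k\bigr)\geq\delta$, contradicting the fact that the two measures share the same null sets. You instead write $d\nu'=h\,d\nu$ and appeal to absolute continuity of the integral of the fixed $L^1$ density $h$ (uniform integrability of a single function), which converts $\nu(g_nB)<\delta$ directly into $\nu'(g_nB)<\epsilon$. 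Both arguments are elementary and complete; yours is the more standard textbook mechanism and gives a quantitative modulus ($\delta$ depending on $\epsilon$ and $h$ alone), while the paper's is slightly more self-contained in that it uses only the qualitative definition of ``same measure class'' and no Radon--Nikodym derivative. Your handling of the ``in particular'' clause (using $\nu\ll\nu'$ to get $\nu(B)<1$ from $\nu'(B)<1$, then applying the first part) matches what the paper leaves implicit, and your remark that the point is to control all the sets $g_nB$ simultaneously by one density rather than one at a time correctly identifies the only real content of the lemma.
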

\begin{proof}
Suppose that $\limsup \nu^{\prime}(g_{n}B) = \delta > 0$.  Let $\{ n_{j} \}$ be the sequence attaining this limit.  Then $\nu(g_{n_{j}}B) \to 0$ and $\nu^{\prime}(g_{n_{j}}B) \to \delta$.  Pick a further subsequence $\{ n_{j_{t}} \}$ such that $\nu(g_{n_{j_{t}}} B) < 2^{-t}$.  Define
$B_{k} = \bigcup_{t=k}^{\infty} g_{n_{j_{t}}} B$
and observe that
$\nu(B_{k}) \leq \sum_{t=k}^{\infty} \nu(g_{n_{j_{t}}}B) \leq \sum_{t=k}^{\infty} 2^{-t} = 2^{-k+1} \to 0$
but
$\nu^{\prime}(B_{k}) \geq \nu^{\prime}(g_{n_{j_{k}}} B) \to \delta$.
As the $B_{k}$ are decreasing,
$\nu(\bigcap_{k} B_{k}) = 0$ but $\nu^{\prime}(\bigcap_{k} B_{k}) \geq \delta$
contradicting that the measures are in the same class.
\end{proof}

We will need a basic fact about the existence of compact models.  This result does not seem to appear explicitly in the literature, but the proof is essentially contained in \cite{Zi84} and follows from \cite{vara}.

\begin{lemma}[Varadarajan \cite{vara}]\label{L:compactmodels}
Let $G$ be a locally compact second countable group, let $(X,\nu)$ and $(Y,\eta)$ be $G$-spaces, let $\nu^{\prime}$ be a Borel probability measure in the same measure class as $\nu$, let $\eta^{\prime} \in P(Y)$ be in the same measure class as $\eta$ and let $\pi : (X,\nu) \to (Y,\eta)$ and $\pi^{\prime} : (X,\nu^{\prime}) \to (Y,\eta^{\prime})$ be $G$-maps.

Then there exists a continuous compact model for $\pi$ and $\pi^{\prime}$, that is, there exist compact metric spaces $X_{0}$ and $Y_{0}$ on which $G$ acts continuously, fully supported Borel probability measures $\nu_{0}, \nu_{0}^{\prime}$ on $X$ and $\eta_{0}, \eta_{0}^{\prime}$ on $Y$, continuous $G$-equivariant maps $\pi_{0} : X_{0} \to Y_{0}$ and $\pi_{0}^{\prime} : X_{0} \to Y_{0}$ and measurable $G$-isomorphisms $\Phi : (X,\nu) \to (X_{0},\nu_{0})$ and $\Psi : (Y,\eta) \to (Y_{0},\eta_{0})$ (which are also $G$-isomorphisms $\Phi : (X,\nu^{\prime}) \to (X_{0},\nu_{0}^{\prime})$ and $\Psi : (Y,\eta^{\prime}) \to (Y_{0},\eta_{0}^{\prime})$) such that the resulting diagrams commute: $\Psi^{-1} \circ \pi_{0} \circ \Phi = \pi$ and $(\Psi^{\prime})^{-1} \circ \pi_{0}^{\prime} \circ \Phi = \pi^{\prime}$.
\end{lemma}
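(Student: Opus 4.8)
The plan is to realize all four objects through Gelfand duality, reducing the statement to the construction of separable, $G$-invariant, unital $C^*$-subalgebras of $L^\infty(X,\nu)$ and $L^\infty(Y,\eta)$ on which $G$ acts \emph{norm}-continuously. Recall that the action on functions is $(g\cdot f)(x)=f(g^{-1}x)$, which is isometric for the essential-supremum norm but only weak-$*$ (not norm) continuous in $g$. If $\mathcal{A}\subseteq L^\infty(X,\nu)$ is a separable unital $C^*$-subalgebra that is $G$-invariant, generates the Borel $\sigma$-algebra modulo null sets, and on which $g\mapsto g\cdot f$ is norm continuous for each $f$, then its Gelfand spectrum $X_{0}$ is a compact metrizable space, $\mathcal{A}\cong C(X_{0})$, and the dual $G$-action on $X_{0}$ is jointly continuous (norm continuity of translation on $C(X_{0})$ is equivalent to joint continuity of $G\times X_{0}\to X_{0}$). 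The evaluation map $x\mapsto\mathrm{ev}_{x}\in X_{0}$ then furnishes a measurable $G$-equivariant isomorphism $\Phi$ onto $(X_{0},\Phi_{*}\nu)$, and the same $\Phi$ serves for $\nu^{\prime}$ since $\nu^{\prime}\sim\nu$ shares the same null sets.

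The engine that produces norm continuity is a smoothing argument. Given any $f\in L^\infty(X,\nu)$ and $\psi\in C_{c}(G)$, set $F(x)=\int_{G}\psi(g)f(g^{-1}x)\,dg$. A change of variables using left-invariance of Haar measure gives $g_{0}\cdot F=\int_{G}(\lambda(g_{0})\psi)(h)\,(h\cdot f)\,dh$, whence
\[
\|g_{0}\cdot F-F\|_{\infty}\le\|f\|_{\infty}\,\|\lambda(g_{0})\psi-\psi\|_{L^{1}(G)}\xrightarrow[g_{0}\to e]{}0
\]
by continuity of translation in $L^{1}(G)$; thus every such $F$ is a norm-continuous vector. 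Choosing $\psi$ from an approximate identity shows $F\to f$ in measure, so a countable family of smoothed functions can be arranged to generate the Borel $\sigma$-algebra. Applying a countable dense subset of $G$ to these generators (and using norm continuity) produces a countable set whose generated $C^*$-algebra is automatically $G$-invariant and separable: this is the desired $\mathcal{A}$.

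I would assemble the joint model in two stages, handling $Y$ first. Build $\mathcal{B}\subseteq L^\infty(Y,\eta)$ as above, generating the $\sigma$-algebra modulo the common null sets of $\eta$ and $\eta^{\prime}$, let $Y_{0}$ be its spectrum, and let $\Psi:Y\to Y_{0}$ be the associated evaluation map with $\eta_{0}=\Psi_{*}\eta$ and $\eta_{0}^{\prime}=\Psi_{*}\eta^{\prime}$; restricting $Y_{0}$ to the closed $G$-invariant set $\mathrm{supp}(\eta_{0})=\mathrm{supp}(\eta_{0}^{\prime})$ makes both measures fully supported. Since $\pi$ and $\pi^{\prime}$ are $G$-maps, the pullbacks $\pi^{*}h=h\circ\pi$ and $(\pi^{\prime})^{*}h=h\circ\pi^{\prime}$ are isometric, $G$-equivariant, and carry norm-continuous vectors to norm-continuous vectors; hence I build $\mathcal{A}\subseteq L^\infty(X,\nu)$ to contain $\pi^{*}\mathcal{B}\cup(\pi^{\prime})^{*}\mathcal{B}$ together with enough smoothed functions to generate the $\sigma$-algebra of $X$, and set $X_{0}$ to be its spectrum, restricted to $\mathrm{supp}(\nu_{0})=\mathrm{supp}(\nu_{0}^{\prime})$. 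By Gelfand duality the unital $*$-homomorphisms $\pi^{*},(\pi^{\prime})^{*}:\mathcal{B}=C(Y_{0})\to\mathcal{A}=C(X_{0})$ are dual to continuous maps $\pi_{0},\pi_{0}^{\prime}:X_{0}\to Y_{0}$, and since $(\pi_{0})_{*}\nu_{0}=\eta_{0}$ these send $\mathrm{supp}(\nu_{0})$ into $\mathrm{supp}(\eta_{0})$, so they restrict to the supports; unwinding the point realization yields $\Psi^{-1}\circ\pi_{0}\circ\Phi=\pi$ and $\Psi^{-1}\circ\pi_{0}^{\prime}\circ\Phi=\pi^{\prime}$ almost everywhere.

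The main obstacle is the \emph{point realization}: passing from the abstract characters of $\mathcal{A}$ back to an almost-everywhere defined measurable isomorphism $\Phi$ of $X$ with a full-measure subset of $X_{0}$. Because elements of $L^\infty$ are equivalence classes, one must choose Borel representatives for a countable generating set and verify that $x\mapsto\mathrm{ev}_{x}$ is well defined, injective, and bimeasurable off a null set; this is exactly the content of the Varadarajan--Mackey realization theorem and is where \cite{vara} and \cite{Zi84} are invoked. The only genuinely new bookkeeping relative to the single-map, single-measure case is ensuring that one algebra $\mathcal{A}$ (resp.\ $\mathcal{B}$) simultaneously serves both measures and is large enough to realize both $\pi$ and $\pi^{\prime}$ continuously; the equivalence $\nu\sim\nu^{\prime}$, which makes null sets and hence generation of the $\sigma$-algebra coincide, is precisely what permits a single pair of isomorphisms $\Phi,\Psi$ to work for both.
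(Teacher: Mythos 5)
Your argument is correct, but it reaches the compact model by a genuinely different construction than the paper's. The paper takes a countable separating family $\mathcal{F}\subseteq L^{\infty}(X,\nu)$ (including the pullbacks $f\circ\pi$ and $f\circ\pi^{\prime}$ of a separating family on $Y$) and embeds $X$ into $\prod_{f\in\mathcal{F}}B$, where $B$ is the unit ball of $L^{\infty}(G,\mathrm{Haar})$ with the weak-$*$ topology, via $x\mapsto\bigl(g\mapsto f(gx)\bigr)_{f\in\mathcal{F}}$; the $G$-action is right translation in each coordinate, which is already jointly continuous on the weak-$*$ compact ball, so no smoothing is needed, and $\pi_{0},\pi_{0}^{\prime}$ are simply coordinate-restriction maps, so the commuting diagrams are tautological. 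You instead manufacture a separable, $G$-invariant $C^{*}$-subalgebra of norm-continuous vectors by convolving against $C_{c}(G)$ and take its Gelfand spectrum. Both routes delegate the same final step --- the a.e.\ identification of $X$ with a conull subset of the model --- to the Varadarajan--Mackey point realization, and both exploit $\nu\sim\nu^{\prime}$ and $\eta\sim\eta^{\prime}$ in the same way: one family (resp.\ one algebra) serves both measures because the null sets agree. What the paper's route buys is economy: continuity comes for free from the weak-$*$ topology. What your route buys is a cleaner conceptual package --- the model is literally the spectrum of a $G$-$C^{*}$-algebra, and full support is easy to arrange by restricting to $\mathrm{supp}(\nu_{0})=\mathrm{supp}(\nu_{0}^{\prime})$ --- at the cost of the smoothing lemma (whose step ``$F\to f$ in measure'' quietly uses the standard but nontrivial continuity in measure of $g\mapsto f\circ g^{-1}$ for measurable actions) and of verifying that $\pi^{*}\mathcal{B}$ and $(\pi^{\prime})^{*}\mathcal{B}$ consist of norm-continuous vectors, which you correctly do via $\pi_{*}\nu=\eta$.
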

\begin{proof}
Let $\mathcal{X}$ be a countable collection of functions in $L^{\infty}(X,\nu) = L^{\infty}(X,\nu^{\prime})$ that separates points and let $\mathcal{Y}$ be a countable collection in $L^{\infty}(Y,\eta)$ that separates points.  Let $\mathcal{F} = \mathcal{X} \cup \{ f \circ \pi : f \in \mathcal{Y} \} \cup \{ f \circ \pi^{\prime} : f \in \mathcal{Y} \}$.  Then $\mathcal{F}$ is a countable collection.  Let $B$ be the unit ball in $L^{\infty}(G,\Haar)$ which is a compact metric space in the weak-* topology.

Define $X_{00} = \prod_{f \in \mathcal{F}} B$ and $Y_{00} = \prod_{f \in \mathcal{Y}} B$, both of which are compact metric spaces using the product topology.  Define $\pi_{00} : X_{00} \to Y_{00}$ to be the restriction map: for $f \in \mathcal{Y}$ take the $f^{th}$ coordinate of $\pi_{00}(x_{00})$ to be the $(f \circ \pi)^{th}$ coordinate of $x_{00}$.  Then $\pi_{00}$ is continuous.  Likewise, define $\pi_{00}^{\prime} : X_{00} \to Y_{00}$ by setting the $f^{th}$ coordinate of $\pi_{00}^{\prime}(x_{00})$ to be the $(f \circ \pi^{\prime})^{th}$ coordinate of $x_{00}$.

Define the map $\Phi : X \to X_{00}$ by $\Phi(x) = (\varphi_{f}(x))_{f \in \mathcal{F}}$ where $(\varphi_{f}(x))(g) = f(gx)$.  Then $\Phi$ is an injective map (since $\mathcal{F}$ separate points).  Observe that $(\varphi_{f}(hx))(g) = f(ghx) = (\varphi_{f}(x))(gh)$.  Consider the $G$-action on $X_{00}$ given by the right action on each coordinate.  Then $G$ acts on $X_{00}$ continuously (and likewise on $Y_{00}$ continuously) and $\Phi$ is $G$-equivariant.  Similarly, define $\Psi : Y \to Y_{00}$ by $\Psi(y) = (\psi_{f}(y))_{f \in \mathcal{Y}}$ where $(\psi_{f}(y))(g) = f(gy)$.

Let $X_{0} = \overline{\Phi(X)}$, let $\nu_{0} = \Phi_{*}\nu$, let $Y_{0} = \overline{\Psi(Y)}$, let $\eta_{0} = \Psi_{*}\eta$ and let $\pi_{0}$ be the restriction of $\pi_{00}$ to $X_{0}$.  Then $\Phi : (X,\nu) \to (X_{0},\nu_{0})$ and $\Psi : (Y,\eta) \to (Y_{0},\eta_{0})$ are $G$-isomorphisms.  Since $(\psi_{f}(\pi(x)))(g) = f(g\pi(x)) = f \circ \pi (gx) = (\varphi_{f \circ \pi}(x))(g)$, $\pi_{0}(X_{0}) = Y_{0}$ and $\Psi^{-1} \circ \pi_{0} \circ \Phi = \pi$.  Likewise, letting $\nu_{0}^{\prime} = \Phi_{*}\nu^{\prime}$, $\eta_{0}^{\prime} = \Psi_{*}\eta^{\prime}$, and $\pi_{0}^{\prime}$ be the restriction of $\pi_{00}^{\prime}$ to $X_{0}$, one readily observes that $\Psi^{-1} \circ \pi_{0}^{\prime} \circ \Phi = \pi^{\prime}$.
\end{proof}

\begin{remark}
The previous result extends to a countable collection of $G$-maps $\pi_{n} : (X,\nu_{n}) \to (Y,\eta_{n})$ when the $\nu_{n}$ are all in the same measure class.
\end{remark}

\begin{proposition}\label{P:contractivemapunique}
Let $G$ be a locally compact second countable group.  Let $(X,\nu)$ be a contractive $G$-space and $(Y,\eta)$ be a $G$-space.  Let $\varphi : (X,\nu) \to (Y,\eta)$ and $\varphi^{\prime} : (X,\nu) \to (Y,\eta^{\prime})$ be $G$-maps such that $\eta$ and $\eta^{\prime}$ are in the same measure class.  Then $\varphi = \varphi^{\prime}$ almost everywhere.
\end{proposition}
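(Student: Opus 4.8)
The plan is to lift everything to a simultaneous continuous compact model and then exploit the Furstenberg--Glasner characterization of contractiveness. First I would apply Lemma~\ref{L:compactmodels} to the two maps $\varphi$ and $\varphi'$ (taking $\nu' = \nu$ there, and using the hypothesis that $\eta$ and $\eta'$ lie in one measure class). This produces compact metric $G$-spaces $X_0, Y_0$, a fully supported measure $\nu_0 = \Phi_*\nu$, continuous $G$-equivariant maps $\pi_0, \pi_0' : X_0 \to Y_0$ with $(\pi_0)_*\nu_0 = \eta_0 := \Psi_*\eta$ and $(\pi_0')_*\nu_0 = \eta_0' := \Psi_*\eta'$, and a single measurable $G$-isomorphism $\Psi$ realizing $(Y,\eta)$ and $(Y,\eta')$ as $(Y_0,\eta_0)$ and $(Y_0,\eta_0')$. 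Since $\Phi$ exhibits $(X_0,\nu_0)$ as a continuous compact model of the contractive space $(X,\nu)$, the Furstenberg--Glasner theorem guarantees that the $G$-action on $(X_0,\nu_0)$ is contractible: for every $x_0 \in X_0$ there is a sequence $g_n \in G$ with $g_n\nu_0 \to \delta_{x_0}$ weakly. It then suffices to prove that $\pi_0 = \pi_0'$ on $X_0$, since transporting this equality back through the isomorphisms $\Phi$ and $\Psi$ yields $\varphi = \varphi'$ $\nu$-almost everywhere.

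Fix $x_0 \in X_0$ together with a sequence $g_n$ as above. Because $\pi_0$ and $\pi_0'$ are continuous and $G$-equivariant, pushing forward the weak convergence $g_n \nu_0 \to \delta_{x_0}$ gives $g_n\eta_0 = (\pi_0)_*(g_n\nu_0) \to \delta_{\pi_0(x_0)}$ and $g_n\eta_0' = (\pi_0')_*(g_n\nu_0) \to \delta_{\pi_0'(x_0)}$ weakly in $P(Y_0)$. The whole argument now reduces to showing that these two limiting point masses coincide, i.e.\ that the point to which the pushed-forward measure concentrates is insensitive to replacing $\eta_0$ by the equivalent measure $\eta_0'$. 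I expect this to be the main obstacle, since a priori the Radon--Nikodym factor between $\eta_0$ and $\eta_0'$ could conceivably shift mass around under the action of $g_n$ and move the concentration point.

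To overcome it I would argue by contradiction. Suppose $a := \pi_0(x_0) \neq b := \pi_0'(x_0)$, and choose disjoint open neighborhoods $U_a \ni a$ and $U_b \ni b$. Concentration of $g_n\eta_0$ at $a$ means $(g_n\eta_0)(U_a) \to 1$, hence $\eta_0\bigl(g_n^{-1}(U_a^c)\bigr) \to 0$. Now the Borel--Cantelli argument used in the proof of Lemma~\ref{L:contractivemeasureclass}---which uses only that $\eta_0$ and $\eta_0'$ share a measure class, and goes through verbatim for the sets $C_n = g_n^{-1}(U_a^c)$ that are allowed to vary with $n$---forces $\eta_0'\bigl(g_n^{-1}(U_a^c)\bigr) \to 0$ as well, hence $(g_n\eta_0')(U_a) \to 1$. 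But $(g_n\eta_0')(U_b) \to 1$ by concentration at $b$, and $U_a \cap U_b = \varnothing$ makes this impossible once $n$ is large. Therefore $\pi_0(x_0) = \pi_0'(x_0)$; as $x_0 \in X_0$ was arbitrary, $\pi_0 = \pi_0'$ on all of $X_0$, which completes the proof.

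The only point requiring care is the extension of Lemma~\ref{L:contractivemeasureclass} from a fixed set $B$ to sets $C_n$ varying with $n$; since its proof (passing to a rapidly decaying subsequence, forming decreasing unions, and using the common measure class on the intersection) is identical, I would either record it as a one-line strengthening of that lemma or simply inline it at the point of use.
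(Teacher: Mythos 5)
Your proof is correct and follows essentially the same route as the paper: pass to a simultaneous continuous compact model via Lemma~\ref{L:compactmodels}, use Furstenberg--Glasner contractibility to concentrate $g_n\nu_0$ at an arbitrary point, push forward by continuity and equivariance, and transfer the concentration from $\eta_0$ to $\eta_0'$ via the measure-class Lemma~\ref{L:contractivemeasureclass}. The one thing you flag as needing care is actually a non-issue: your sets $C_n = g_n^{-1}(U_a^c)$ are translates of the \emph{fixed} set $U_a^c$ by the sequence $g_n^{-1}$, which is precisely the setting of Lemma~\ref{L:contractivemeasureclass} as stated, so no strengthening is required.
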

\begin{proof}
By Lemma \ref{L:compactmodels}, take $X$ and $Y$ to be compact metric spaces where $G$ acts continuously and such that $\varphi,\varphi^{\prime} : X \to Y$ are continuous maps.
Since $(X,\nu)$ is a contractive $G$-space, the model is contractible.  Let $x_{0} \in X$ be arbitrary.  Then there exists a sequence $g_{n} \in G$ such that $g_{n}\nu \to \delta_{x_{0}}$ weakly.

Since $\varphi$ is continuous so is the pushforward map $\varphi_{*}$ and therefore $\varphi_{*}(g_{n}\nu) \to \varphi_{*}(\delta_{x_{0}})$.  By the $G$-equivariance of $\varphi$ this means $g_{n}\eta = g_{n} (\varphi_{*}\nu) \to \varphi_{*}(\delta_{x_{0}}) = \delta_{\varphi(x_{0})}$.
Of course the same reasoning gives that $g_{n}\eta^{\prime} \to \delta_{\varphi^{\prime}(x_{0})}$.  

Let $B \subseteq Y$ be any open set containing $\varphi(x_{0})$.  Then $g_{n}\eta(B^{C}) \to \delta_{\varphi(x_{0})}(B^{C}) = 0$ since $B^{C}$ is a continuity set for $\delta_{\varphi(x_{0})}$ (the portmanteau Theorem).  By Lemma \ref{L:contractivemeasureclass}, $g_{n}\eta^{\prime}(B^{C}) \to 0$ also so $\varphi^{\prime}(x_{0}) \in B$.  As this holds for all open sets $B$ 
containing $\varphi(x_{0})$, it follows that $\varphi^{\prime}(x_{0}) = \varphi(x_{0})$.  Since $x_{0}$ was arbitrary this means that $\varphi = \varphi^{\prime}$ as maps between the compact models.  So $\varphi = \varphi^{\prime}$ measurably.
\end{proof}

\subsection{The Contractive Factor Theorem}

\begin{theorem}\label{T:factor}
Let $G$ be a locally compact second countable group, $\Gamma<G$ a lattice, 
and $\Lambda<G$ a dense subgroup that contains and commensurates $\Gamma$.

Let $(X,\nu)$ be a $G$-space such that the restriction of the action to 
$\Gamma$ is contractive,
and let $(Y,\eta)$ be a $\Lambda$-space such that there exists a $\Gamma$-map $\varphi: (X,\nu) \to (Y,\eta)$.

Then the $\Lambda$-action on $(Y,\eta)$ extends measurably to $G$, in such a 
way that $\varphi$ becomes a $G$-map. More precisely, 
there exists a $G$-space $(Y^{\prime},\eta^{\prime})$, a $G$-map $\varphi^{\prime} : (X,\nu) \to (Y^{\prime},\eta^{\prime})$ and a $\Lambda$-isomorphism $\rho : (Y,\eta) \to (Y^{\prime},\eta^{\prime})$ such that $\varphi^{\prime} = \rho \circ \varphi$ a.e.


\end{theorem}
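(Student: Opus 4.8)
The plan is to first upgrade $\varphi$ from a $\Gamma$-map to a genuine $\Lambda$-map, and then to promote the resulting $\Lambda$-equivariance to $G$-equivariance by exploiting the density of $\Lambda$ together with the continuity of the $G$-action on the measure algebra of $(X,\nu)$.

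For the first, \emph{decisive} step, I fix $\lambda \in \Lambda$ and consider the twisted map $\varphi^{\lambda} : X \to Y$ defined by $\varphi^{\lambda}(x) = \lambda^{-1}\varphi(\lambda x)$, where $\lambda$ acts on $X$ through the ambient $G$-action and on $Y$ through the given $\Lambda$-action. Set $\Gamma_{\lambda} = \Gamma \cap \lambda^{-1}\Gamma\lambda$. The commensuration hypothesis applied to $\lambda^{-1}$ says precisely that $\Gamma_{\lambda}$ has finite index in $\Gamma$, and a direct computation using $\lambda\gamma\lambda^{-1} \in \Gamma$ for $\gamma \in \Gamma_{\lambda}$ together with the $\Gamma$-equivariance of $\varphi$ shows that $\varphi^{\lambda}$ is a $\Gamma_{\lambda}$-map. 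Being finite index in the discrete group $\Gamma$, the subgroup $\Gamma_{\lambda}$ is co-compact in $\Gamma$, so Proposition \ref{P:cocptlattcontractive} guarantees that $\Gamma_{\lambda}$ still acts contractively on $(X,\nu)$. Moreover, since $\nu$ is $G$-quasi-invariant and $\eta$ is $\Lambda$-quasi-invariant, one checks that $(\varphi^{\lambda})_{*}\nu$ lies in the measure class of $\eta$. Thus $\varphi$ and $\varphi^{\lambda}$ are two $\Gamma_{\lambda}$-maps out of the $\Gamma_{\lambda}$-contractive space $(X,\nu)$ with targets in a common measure class, and Proposition \ref{P:contractivemapunique} forces $\varphi^{\lambda} = \varphi$ almost everywhere. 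Unwinding the definition gives $\varphi(\lambda x) = \lambda\varphi(x)$ for a.e.\ $x$ and every $\lambda \in \Lambda$; that is, $\varphi$ is a $\Lambda$-map.

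For the second step, let $\mathcal{A} = \varphi^{-1}(\mathcal{B}_{Y}) \subseteq \mathcal{B}_{X}$ be the complete sub-$\sigma$-algebra generated by $\varphi$. The $\Lambda$-equivariance just proved yields $\lambda\cdot\mathcal{A} = \mathcal{A}$ modulo $\nu$-null sets for every $\lambda \in \Lambda$ (no simultaneous co-null set is needed, only one per $\lambda$). I claim $\mathcal{A}$ is in fact $G$-invariant. Indeed $\mathcal{A}$ is closed in the complete measure algebra $(\mathfrak{M}, d)$ with $d(A,B) = \nu(A \triangle B)$, and for a quasi-invariant action of the lcsc group $G$ the map $g \mapsto gA$ is continuous from $G$ into $(\mathfrak{M}, d)$; the latter reduces, via $\nu(\lambda_{n} A \triangle gA) = (g^{-1}_{*}\nu)(h_{n} A \triangle A)$ with $h_{n} = g^{-1}\lambda_{n} \to e$, to continuity of the action at the identity together with the $\varepsilon$--$\delta$ absolute continuity of $g^{-1}_{*}\nu$ with respect to $\nu$. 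Hence, given $g \in G$, choose $\lambda_{n} \in \Lambda$ with $\lambda_{n} \to g$ (possible since $\Lambda$ is dense and $G$ is metrizable); for any $A \in \mathcal{A}$ we have $\lambda_{n} A \in \mathcal{A}$ and $\lambda_{n} A \to gA$ in $\mathfrak{M}$, so closedness of $\mathcal{A}$ gives $gA \in \mathcal{A}$.

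Finally, since $\mathcal{A}$ is a $G$-invariant sub-$\sigma$-algebra, the $G$-action on $(X,\nu)$ descends to the associated measurable factor $(Y^{\prime},\eta^{\prime})$ with factor map $\varphi^{\prime} : (X,\nu) \to (Y^{\prime},\eta^{\prime})$ automatically a $G$-map. Because $\mathcal{A}$ is exactly the $\sigma$-algebra generated by $\varphi$ and $\varphi_{*}\nu = \eta$, the map $\varphi$ realizes $(Y,\eta)$ as this same factor, giving a measurable isomorphism $\rho : (Y,\eta) \to (Y^{\prime},\eta^{\prime})$ with $\varphi^{\prime} = \rho \circ \varphi$ a.e.; and $\rho$ intertwines the two $\Lambda$-actions because both restrict to the common factor $\Lambda$-action carried by $\varphi$. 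Transporting the $G$-action back through $\rho$ then extends the $\Lambda$-action on $(Y,\eta)$ to $G$, as required. I expect the genuine obstacle to be the first step — recognizing that contractivity of the finite-index subgroup $\Gamma_{\lambda}$ combined with the uniqueness of contractive factor maps rigidifies $\varphi$ into a $\Lambda$-map — while the density-and-continuity argument of the second step and the descent in the third are comparatively routine.
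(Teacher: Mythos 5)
Your proposal is correct and follows essentially the same route as the paper: twist $\varphi$ by $\lambda$, observe that $\Gamma\cap\lambda^{-1}\Gamma\lambda$ is finite index (hence co-compact) in $\Gamma$ so Proposition \ref{P:cocptlattcontractive} applies, invoke the uniqueness of maps out of contractive spaces (Proposition \ref{P:contractivemapunique}) to conclude $\varphi$ is a $\Lambda$-map, and then pass to the $G$-invariant sub-$\sigma$-algebra via density and Mackey point realization. The only difference is that you spell out the continuity-plus-closedness argument for $G$-invariance of the sub-$\sigma$-algebra, which the paper leaves implicit.
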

\begin{proof}
Fix $\lambda \in \Lambda$.  Since $\Lambda$ commensurates $\Gamma$, the subgroup $\Gamma_{0} = \Gamma \cap \lambda^{-1}\Gamma\lambda$ is also a lattice in $G$.
Consider the map $\varphi_{\lambda} : X \to Y$ given by
\[
\varphi_{\lambda}(x) := \lambda^{-1} \varphi(\lambda x)
\]
Since $\varphi$ is $\Gamma$-equivariant, $\varphi_{\lambda}$ is $\Gamma_{0}$-equivariant: for $\gamma_{0} \in \Gamma_{0}$ we have
$\lambda \gamma_{0} \lambda^{-1} \in \Gamma$
and so
\[
\varphi_{\lambda}(\gamma_{0} x) = \lambda^{-1}\varphi(\lambda \gamma_{0}x) =  \lambda^{-1}\varphi((\lambda \gamma_{0} \lambda^{-1}) \lambda x)
= \lambda^{-1} (\lambda \gamma_{0} \lambda^{-1}) \varphi(\lambda x) = \gamma_{0} \varphi_{\lambda}(x).
\]

Let $\eta = \varphi_{*}\nu$ be the pushforward of $\nu$ to $Y$ over $\varphi$ and $\eta^{\prime} = (\varphi_{\lambda})_{*}\nu$ be the pushforward over $\varphi_{\lambda}$.  Then $\eta$ and $\eta^{\prime}$ are in the same measure class: if $\eta(A) = 0$ then $\eta(\lambda A) = 0$ by the $\Lambda$-quasi-invariance of $\eta$, and therefore $\nu(\varphi^{-1}(\lambda A)) = 0$. But
$\eta^{\prime}(A) = \lambda\nu(\varphi^{-1}(\lambda A))$
so by the $\Lambda$-quasi-invariance of $\nu$ this is zero, hence the measures are in the same class.

By Proposition \ref{P:cocptlattcontractive}, the action of $\Gamma_{0}$ on $(X,\nu)$ is contractive since the $\Gamma$-action is. 
Since $\varphi$ and $\varphi_{\lambda}$ are both $\Gamma_{0}$-equivariant maps, one relative to a contractive $\Gamma_{0}$-space, and one relative to another measure in the class of the contractive measure, by Proposition \ref{P:contractivemapunique}, $\varphi_{\lambda} = \varphi$ a.e.  Hence for each $\lambda$ we have that
$\lambda^{-1}\varphi(\lambda x) = \varphi(x)$
for almost every $x$, making $\varphi$ a $\Lambda$-map. 

Treating $L^{\infty}(Y,\eta)$ as a $\Lambda$-invariant sub-$\sigma$-algebra of the $G$-invariant $\sigma$-algebra $L^{\infty}(X,\nu)$, the density of $\Lambda$ in $G$ means that as a $\sigma$-algebra $L^{\infty}(Y,\eta)$ is $G$-invariant.  Then by Mackey's point realization there exists a $G$-space $(Y^{\prime},\eta^{\prime})$ measurably $\Lambda$-isomorphic to $(Y,\eta)$, and a $G$-map $(X,\nu) \to (Y^{\prime},\eta^{\prime})$ such that this map composed with the $\Lambda$-isomorphism is $\varphi$.
\end{proof}

\section{Proof of the Main Theorem}\label{sec:proofmain}

\begin{theorem}\label{T:1}
Let $G$ be a locally compact, second countable, compactly generated group that is not a compact extension of an abelian group.

Let $\Gamma < G$ be a finitely generated square integrable lattice and let $\Lambda < G$ be a dense subgroup of $G$ that contains and commensurates $\Gamma$.

Then every infinite normal subgroup $N \normal \Lambda$ has the property that $N \cap \Gamma$ has finite index in $\Gamma$, if and only if $\Lambda$ intersects finitely every closed normal non-cocompact subgroup of $G$.
\end{theorem}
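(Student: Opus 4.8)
The plan is to follow Margulis' two-step strategy in the form used in \cite{BS05}, substituting the Contractive Factor Theorem (Theorem \ref{T:factor}) for Margulis' boundary Factor Theorem. Throughout I fix a symmetric probability measure $\mu$ on $G$ whose support generates $G$ and let $(B,\nu)$ be the corresponding Poisson boundary, a $G$-space on which $\Gamma$ acts contractively by Proposition \ref{P:latticecontractive}. First I reformulate the statement: writing $q\colon \Lambda \to Q=\Lambda/N$ for the quotient by an infinite normal subgroup $N\normal\Lambda$, it suffices to prove that $q(\Gamma)=\Gamma/(N\cap\Gamma)$ is finite, which is precisely the assertion that $N\cap\Gamma$ has finite index in $\Gamma$. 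The converse implication is the easy direction: if $M\normal G$ is closed and non-cocompact then $\Lambda\cap M\normal\Lambda$, and were it infinite the hypothesized conclusion would force $\Gamma\cap M=(\Lambda\cap M)\cap\Gamma$ to have finite index in $\Gamma$, hence to be a lattice contained in the proper closed normal subgroup $M$; a covolume computation (using $M\normal G$) shows $G/M$ has finite Haar measure, so is compact, contradicting non-cocompactness.

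For the amenability half, since $N\normal\Lambda$ the algebra $L^{\infty}(B)^{N}$ of $N$-invariant functions is $\Lambda$-invariant, so it is realized by a $\Lambda$-space $Y$ together with a $\Lambda$-equivariant (in particular $\Gamma$-equivariant) factor map $\varphi\colon B\to Y$ on which $N$ acts trivially. Applying Theorem \ref{T:factor} to the contractive $\Gamma$-space $B$ and the $\Gamma$-map $\varphi$, the $\Lambda$-action on $Y$ extends to a $G$-action realizing $Y$ as a $G$-equivariant quotient boundary $Y'$ of $B$; since the extension is a $\Lambda$-isomorphism, $N$ still lies in the kernel $M:=\ker(G\actson Y')$, a closed normal subgroup of $G$. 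If $Y'$ were non-trivial then $M$ would be non-cocompact, for a cocompact $M$ makes $G/M$ compact and hence of trivial Poisson boundary, forcing the $M$-trivial quotient $Y'$ to be trivial. But a non-cocompact $M$ containing the infinite group $N$ contradicts the hypothesis that $\Lambda$ meets every closed normal non-cocompact subgroup finitely. Hence $Y'$ is trivial, i.e.\ $N$ acts ergodically on $(B,\nu)$. Since the $G$-action on $B$ is amenable, its restriction to the countable subgroup $\Gamma N<G$ is amenable; as the normal subgroup $N\normal\Gamma N$ acts ergodically, Zimmer's descent principle yields that $\Gamma N/N\cong\Gamma/(N\cap\Gamma)=q(\Gamma)$ is amenable.

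For the property (T) half it remains to exclude $q(\Gamma)$ being infinite. Suppose it is; then $q(\Gamma)$ is infinite amenable, hence fails property (T), so it carries a unitary representation with non-zero reduced first cohomology. Pulling back along $\Gamma\to q(\Gamma)$ gives a non-zero class in $\bar H^{1}(\Gamma,\pi)$ for a unitary $\Gamma$-representation $\pi$ factoring through the infinite quotient $q(\Gamma)$, and therefore without non-zero invariant vectors; decomposing, one arranges the cohomological representation $\pi$ to be irreducible. Here the finite generation and square-integrability of $\Gamma$ are essential: by the induction machinery of \cite{Sha00} the class transfers to a non-zero element of $\bar H^{1}(G,\tilde\pi)$ for an associated irreducible unitary $G$-representation $\tilde\pi$. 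The structure theory for compactly generated groups carrying non-vanishing reduced cohomology in an irreducible representation, following Gelander--Karlsson--Margulis \cite{GKM08} together with \cite{Sha00}, then forces $G$ to be a compact extension of an abelian group, contrary to hypothesis. This contradiction shows $q(\Gamma)$ is finite.

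I expect the property (T) half to be the main obstacle, and indeed it is where the essentially new analytic input lies: unlike in the higher-rank lattice setting, one has no property (T) to pass to the quotient, so the finiteness must be extracted from reduced cohomology, and the crucial point is that the cohomological representation can be taken \emph{irreducible} so that the GKM structure theorem applies and pins down $G$ as compact-by-abelian. Producing a genuinely non-zero \emph{reduced} (not merely non-reduced) class on the amenable quotient, keeping it irreducible under the square-integrable induction to $G$, and verifying that the excluded degenerate case matches exactly the hypothesis that $G$ is not a compact extension of an abelian group, are the delicate steps; by contrast the amenability half is essentially a formal consequence of Theorem \ref{T:factor}, the amenability of the boundary action, and the dichotomy between cocompact and non-cocompact kernels.
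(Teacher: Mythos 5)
Your overall architecture (reduce to showing $\Gamma/(N\cap\Gamma)$ is finite, prove it amenable and prove it has property (T)) matches the paper, and your converse direction is correct, but both halves contain genuine gaps. In the amenability half the fatal step is the last one: you restrict the (Zimmer-)amenable $G$-action on the Poisson boundary $B$ to the subgroup $\Gamma N$ and invoke descent. But $\Gamma N$ is in general not a closed subgroup of $G$ (it is typically a countable dense subgroup of the cocompact group $\overline{\Gamma N}$), and amenability of an action does not pass to non-closed subgroups --- a dense free subgroup of $SO(3)$ acting on a point inherits nothing from the amenability of the $SO(3)$-action. This is precisely the difficulty the Contractive Factor Theorem is designed to bypass: the paper uses amenability only of the action of the \emph{closed} subgroup $\Gamma$ to produce, for each compact metric $\Lambda/N$-space $Z$, a $\Gamma$-map $B\to P(Z)$ (which is a map of $\Lambda$-spaces because $\Gamma$ surjects onto $\Lambda/N$ and $N$ acts trivially on $Z$), upgrades it to a $G$-map via Theorem \ref{T:factor}, averages over the compact group $G/\overline{N}$ to get a $G$-invariant measure, and takes a barycenter. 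Your observation that $N$ acts ergodically on $B$ is correct as far as it goes, but it does not feed into a descent argument you are entitled to make.

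The property (T) half breaks down entirely. There is no ``structure theory'' asserting that a compactly generated group with non-vanishing reduced first cohomology in some irreducible unitary representation must be a compact extension of an abelian group: by Shalom's characterization that statement would say every compactly generated group without property (T) is compact-by-abelian, which is false (free groups, $SO(n,1)$, etc.). The hypothesis that $G$ is not compact-by-abelian is used in the paper only once and much earlier: via Lemma \ref{L:finindtrick} it rules out $[N,N]$ being finite, so that $\overline{[N,N]}$ (and hence $\overline N$) is cocompact in $G$. That cocompactness, together with the fact that the cohomological representation is trivial on $N$, is the engine of the whole (T) half, and your sketch never uses either. The paper's route is: (i) the linear part of an irreducible cohomological representation of $\Lambda/N$ extends to $G$ by Theorem 10.3 of \cite{Sha00} and, being trivial on $\overline N$, factors through the compact group $G/\overline N$, hence is finite-dimensional; (ii) if no finite-index subgroup of $\Gamma$ had almost invariant vectors, Proposition \ref{P:GKM} (the actual use of \cite{GKM08}) would extend the fixed-point-free affine action to $G$, which again factors through $G/\overline N$ and so has a fixed point, a contradiction --- whence, by finite-dimensionality and irreducibility, $\pi(\Gamma)$ is finite; (iii) a finite representation with non-zero reduced cohomology yields a non-zero homomorphism $\Gamma_0\to\mathbb{R}$, which extends to $\overline{\Lambda_0}$ by Theorem 0.8 of \cite{Sha00} and must vanish on the cocompact subgroup $\overline{[N,N]}$, a contradiction. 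None of these steps appears in your proposal, and the single structure theorem you appeal to in their place does not exist.
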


Theorem \ref{T:1} will be a consequence of the following Proposition.  
We shall 
first state it and prove that Theorem \ref{T:1} follows from it, and then turn to proving the result.

\begin{proposition}[The Reduction Step]\label{P:2}
Let $\Gamma <_{c} \Lambda < G$ be as in Theorem \ref{T:1}, but with no structural restriction on $G$. Let $N$ be a normal subgroup 
of $\Lambda$ such that
$\Gamma$ maps onto $\Lambda / N$ via the coset map $\Lambda \to \Lambda / N$,
and $\overline{[N,N]}$ is co-compact in $G$ (hence $\overline {N}$ is as well).
Then $\Lambda / N$ is finite.
\end{proposition}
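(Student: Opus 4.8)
The plan is to show that the quotient group $Q := \Lambda/N$ is simultaneously amenable and has Kazhdan's property (T); since $Q$ is finitely generated (being a quotient of the finitely generated lattice $\Gamma$, which by hypothesis surjects onto it), an amenable group with property (T) is finite, and we are done. Following Margulis' original strategy, I would establish the two properties by entirely independent arguments: an \textbf{amenability half} resting on the Contractive Factor Theorem, and a \textbf{property (T) half} resting on reduced cohomology. The two structural hypotheses enter separately. Cocompactness of $\overline{N}$ (which follows from that of $\overline{[N,N]}$) drives the amenability half; the finer cocompactness of $\overline{[N,N]}$ itself is what the cohomological half needs; and surjectivity of $\Gamma \to Q$ keeps $Q$ finitely generated and is used to transport cohomology from $Q$ to $\Gamma$.

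\textbf{Amenability half.} Let $(B,\nu)$ be the Poisson boundary of $(G,\mu)$ for a symmetric probability measure $\mu$ whose support generates $G$. By Proposition \ref{P:latticecontractive} the $\Gamma$-action on $B$ is contractive, while the $G$-action on $B$ is amenable, so its restriction to the subgroup $\Lambda$ is an amenable $\Lambda$-action. Let $B_{N}$ be the space attached to the subalgebra $L^{\infty}(B)^{N}$ of $N$-invariant functions; since $N \normal \Lambda$, this subalgebra is $\Lambda$-invariant, and the projection $\varphi: B \to B_{N}$ is a $\Lambda$-map, in particular a $\Gamma$-map out of the contractive $\Gamma$-space $(B,\nu)$. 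Theorem \ref{T:factor} then extends the $\Lambda$-action on $B_{N}$ to $G$, so that $L^{\infty}(B)^{N}$ is in fact $G$-invariant. As $N$ acts trivially on $B_{N}$ and is dense in $\overline{N}$, continuity of the $G$-action forces $\overline{N}$ to act trivially as well, whence $L^{\infty}(B)^{N} = L^{\infty}(B)^{\overline{N}}$. Now $B_{N}$ is a $G$-factor of the contractive boundary $B$, hence itself contractive; but its $G$-action factors through the compact group $G/\overline{N}$, and a compact group acting contractively preserves a measure in the class (Lemma \ref{L:contractivemeasureclass}), forcing $B_{N}$ to be a single point. Thus $L^{\infty}(B)^{N} = \mathbb{C}$, i.e. $N$ acts ergodically on $B$. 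Finally I would invoke the standard fact (Zimmer, \cite{Zi84}) that an amenable $\Lambda$-action on $(B,\nu)$ with $N \normal \Lambda$ acting ergodically yields amenability of $\Lambda/N$: amenability produces, for any compact $Q$-space $Z$, a $\Lambda$-equivariant map $B \to \mathrm{Prob}(Z)$, which is $N$-invariant since $N$ acts trivially on $Z$, hence constant by ergodicity, giving a $Q$-invariant measure on $Z$.

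\textbf{Property (T) half.} Suppose $Q$ were infinite; being amenable and finitely generated it cannot have property (T), so by the reduced-cohomology criterion there is a unitary $Q$-representation $\pi$ with $\overline{H}^{1}(Q,\pi) \neq 0$, which I may arrange to be irreducible. Inflating along $\Gamma \to Q$ produces a nonzero class in $\overline{H}^{1}(\Gamma, \pi|_{\Gamma})$, with coefficients trivial on $\Gamma \cap N$. Here the square integrability of $\Gamma$ enters: by the induction machinery of \cite{Sha00} the reduced cohomology of the square integrable lattice $\Gamma$ transports to that of $G$ with coefficients in the induced representation, yielding a nonzero class in $\overline{H}^{1}(G,\Sigma)$ with $\Sigma = \mathrm{Ind}_{\Gamma}^{G}(\pi|_{\Gamma})$. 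To this class on the compactly generated group $G$ I would apply the harmonic-cocycle analysis of \cite{GKM08}, crucially after reducing to the irreducible cohomological representation: this extracts a continuous homomorphism $\alpha: G \to \mathbb{R}$ as the linear part of a harmonic cocycle on a finite-dimensional invariant subspace. Since $\pi$ is trivial on $N$, the underlying cocycle restricts on $N$ to an ordinary homomorphism into a vector space, hence kills $[N,N]$ and, by continuity, $\overline{[N,N]}$; thus $\alpha$ vanishes on $\overline{[N,N]}$. But $\overline{[N,N]}$ is cocompact, so $\alpha$ factors through the compact group $G/\overline{[N,N]}$ and must be zero, contradicting nonvanishing of the class. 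Hence $\overline{H}^{1}(Q,\pi) = 0$ for every $\pi$, i.e. $Q$ has property (T).

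The main obstacle is the property (T) half, and within it the passage to $G$: making precise that a nonzero reduced class of $Q$, after inflation to $\Gamma$ and $L^{2}$-induction to $G$ via square integrability, survives as a nonzero reduced class of $G$, and then that the \cite{GKM08} harmonic-map analysis — only available after reducing to an irreducible cohomological representation — delivers an honest continuous homomorphism whose restriction to $N$ reflects the triviality of $\pi$ there. By contrast the amenability half is comparatively clean once the Contractive Factor Theorem is granted; its one delicate point, the upgrade $L^{\infty}(B)^{N} = L^{\infty}(B)^{\overline{N}}$ from invariance under the countable $N$ to invariance under the closed subgroup $\overline{N}$, is exactly what Theorem \ref{T:factor} is designed to supply.
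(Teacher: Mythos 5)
Your overall skeleton --- $\Lambda/N$ is finitely generated, amenable, and has property (T), hence finite --- is exactly the paper's, and your ergodicity route in the amenability half (showing $N$ acts ergodically on the Poisson boundary because the Mackey realization of $L^{\infty}(B)^{N}$ extends, via Theorem \ref{T:factor}, to a contractive $G$-factor on which the action passes through the compact group $G/\overline{N}$) is a correct and genuinely different way to organize that half. But it rests on a false step: you assert that amenability of the $G$-action on $B$ restricts to amenability of the $\Lambda$-action. Restriction of amenable actions is only valid for \emph{closed} subgroups \cite{Zi84}; $\Lambda$ is dense and countable, and the claim fails in general (a nonamenable countable dense subgroup of a compact group acting on a point is the basic counterexample). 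Your final step needs a $\Lambda$-equivariant map $B \to P(Z)$ in order to apply $N$-invariance and ergodicity of $N$; amenability of the closed subgroup $\Gamma$ only yields a $\Gamma$-equivariant map, which is a priori only $(N\cap\Gamma)$-invariant, and you have not shown $N\cap\Gamma$ acts ergodically. The paper sidesteps this: it takes the $\Gamma$-map $\varphi: B \to P(Z)$, observes that $\varphi_{*}\nu$ is $\Lambda$-quasi-invariant because $\Gamma$ surjects onto $\Lambda/N$ and $N$ acts trivially on $Z$, and then uses Theorem \ref{T:factor} to promote $\varphi$ to a $G$-map, after which averaging over $G/\overline{N}$ and taking a barycenter produces the invariant measure. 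Your argument can be repaired by the same device (Proposition \ref{P:contractivemapunique} shows the $\Gamma$-map is automatically a $\Lambda$-map, and then ergodicity of $N$ finishes), but as written the amenability of the $\Lambda$-action is an unjustified, and in general false, assertion.

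The property (T) half has a more serious gap. You pass from a nonzero class in $\overline{H^{1}}(G, \mathrm{Ind}_{\Gamma}^{G}(\pi|_{\Gamma}))$ directly to ``a continuous homomorphism $\alpha: G \to \mathbb{R}$.'' No such implication holds for a general irreducible cohomological representation: nonvanishing reduced cohomology yields a homomorphism to $\mathbb{R}$ only once the coefficient representation is known to be \emph{finite}, so that a finite-index subgroup acts trivially and the cocycle becomes an honest homomorphism into the vector group. Neither \cite{GKM08} nor the induction machinery of \cite{Sha00} produces this for you; the result of \cite{GKM08} extends affine isometric actions of $\Lambda$ to $G$ under a no-almost-invariant-vectors hypothesis, it does not linearize cohomology classes. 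The paper needs three separate reductions here, and cocompactness of $\overline{N}$ (not only of $\overline{[N,N]}$) is essential to two of them: (i) Shalom's Theorem 10.3 extends the linear part of the $\Lambda$-representation to $G$, where triviality on $\overline{N}$ forces it to factor through the compact group $G/\overline{N}$ and hence be finite-dimensional; (ii) Proposition \ref{P:GKM} shows that if no finite-index subgroup of $\Gamma$ had almost invariant vectors, the affine action would extend to $G$, factor through $G/\overline{N}$, and have a fixed point --- so some finite-index subgroup does, and finite-dimensionality plus irreducibility then force $\pi(\Gamma)=\pi(\Lambda)$ to be finite; (iii) only at that point does the cocycle restrict to a homomorphism $\Lambda_{0} \to \mathbb{R}$ on the finite-index kernel, which Shalom's Theorem 0.8 extends from $\Gamma_{0}$ to $\overline{\Lambda_{0}}$ and which must vanish on the cocompact $\overline{[N,N]}$. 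Your sketch collapses (i)--(iii) into a single unsupported step and never uses the cocompactness of $\overline{N}$ in this half at all.
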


\begin{proof}[Proof of Theorem \ref{T:1} assuming Proposition \ref{P:2}]
Assume first that $\Lambda$ intersects finitely every closed normal non-cocompact subgroup of $G$, and let $N \normal \Lambda$ be any infinite normal subgroup. 
Then $\overline{N} \normal \overline{\Lambda} = G$ and since $N$ is infinite and contained in $\Lambda \cap \overline{N}$ it follows from the assumption of the Theorem
that $\overline{N}$ is co-compact in $G$. Now 
$[N,N]$ is a characteristic normal subgroup of $N$, hence also $[N,N] \normal \Lambda$. 
 Then either $[N,N]$ is finite, or it's infinite, in which case the exact same argument as before (with $N$ replaced by $[N,N]$) shows
that  $\overline{[N,N]}$ is co-compact in $G$.  We now observe that the first possibility cannot occur.


Indeed, it is a general fact that $\overline {[N,N]} = \overline { [\overline {N}, \overline {N}]}$, hence the assumed finiteness property of  $[N,N]$ implies that property
for the left, hence also for the right hand side.  
Now $\overline{N}<G$ is co-compact so it inherits compact generation from $G$.  By the general Lemma \ref{L:finindtrick} below it then follows from this finiteness property that the center $Z(\overline{N})$ has 
finite index in $\overline{N}$, hence is co-compact as well in $G$.  Being a characteristic normal subgroup of $\overline{N}$, it is also normal in $G$.  
Hence $G$ is a compact extension of the abelian group $Z(\overline{N})$, contradicting the hypothesis of the Theorem.  
We conclude that the second possibility holds: $\overline{[N,N]}$ is co-compact in $G$.

Let $\Lambda^{\prime} = \Gamma \cdot N$.  Then $\Lambda^{\prime}$ is a subgroup of $\Lambda$ that contains and commensurates $\Gamma$.  
Clearly $\Gamma$ maps onto $\Lambda^{\prime} / N$ via the coset map $\gamma \mapsto \gamma N$.
We are now in position to apply Proposition \ref{P:2} to the groups $\Gamma < \Lambda^{\prime} < \overline{\Lambda^{\prime}}$ with $N \normal \Lambda^{\prime}$ (the closure of 
$[N,N]$, being co-compact in $G$, is so in $\overline{\Lambda^{\prime}}$ as well). It follows from this Proposition 
that $\Lambda^{\prime} / N$ is finite.  Then $\Gamma / (\Gamma \cap N) \simeq (\Gamma \cdot N) / N$ is finite as well, so $N$ contains a finite index subgroup of $\Gamma$, as required.

The reverse direction of Theorem \ref{T:1} is easy, and we prove it for completeness.
 Assume that for every infinite normal subgroup $N \normal \Lambda$ it holds that $N \cap \Gamma$ has finite index in $\Gamma$.  
We need to show that every closed $M \normal G$ which intersects $\Lambda$ infinitely, is co-compact.  
Given such $M$, set $N = M \cap \Lambda \normal \Lambda $, noting that 
here by the reverse assumption of the Theorem $N \cap \Gamma = M \cap \Gamma$ 
has finite index in $\Gamma$.  Since (every finite index subgroup of) $\Gamma$ has co-finite Haar measure in $G$, it follows that so does the normal subgroup $M \normal G$. 
Hence the group $G / M$ has finite Haar measure, and is therefore compact, as required. This completes the reduction of the proof of Theorem \ref{T:1} to
 Proposition \ref{P:2}, modulo the following general (and probably well known) Lemma.
\end{proof}

\begin{lemma}\label{L:finindtrick}
Let $H$ be a compactly generated, second countable locally compact group, for which $[H,H]$ is finite.  Then the center $Z(H)$ has finite index in $H$.
\end{lemma}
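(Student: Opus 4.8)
The plan is to exploit the finiteness of $A := [H,H]$ in two stages: first pass to the centralizer of $A$, where the commutator map becomes a homomorphism in each variable, and then use compact generation to see that only finitely many ``commutator homomorphisms'' occur, so that $Z(H)$ is a finite intersection of finite-index subgroups.

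First I would pass to $C := C_H(A)$, the kernel of the conjugation homomorphism $H \to \mathrm{Aut}(A)$. Since $A$ is finite, $\mathrm{Aut}(A)$ is finite, so $C$ is closed (being the centralizer of a finite set), of finite index, hence open and normal; as an open finite-index subgroup it is again compactly generated and second countable. The gain is that now $A \subseteq Z(C)$, so for every $h \in H$ the map $e_h : C \to A$, $e_h(x) := [h,x]$, is a genuine homomorphism: using the identity $[h,x_1x_2] = [h,x_1]\cdot x_1[h,x_2]x_1^{-1}$ together with the fact that $x_1 \in C$ centralizes $[h,x_2] \in A$, one gets $e_h(x_1x_2) = e_h(x_1)e_h(x_2)$. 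Consequently each centralizer $C_C(h) = \ker e_h$ has index at most $|A|$ in $C$, being the kernel of a homomorphism into a group of order $|A|$, and it is open. Moreover, if $K \subseteq H$ is a compact generating set, then an element of $C$ commuting with all of $K$ commutes with all of $H$, so $Z(H) \cap C = \bigcap_{h \in K} C_C(h)$.

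The crux is to show that only finitely many subgroups occur among $\{ C_C(h) : h \in K \}$, and this is the step I expect to be the main obstacle. For it I would observe that the commutator map $\beta : H \times C \to A$, $(h,x) \mapsto [h,x]$, is continuous, and that since $A$ is finite its subspace topology inside the (Hausdorff) group $H$ is discrete; thus $\beta$ is continuous into the discrete finite set $A$. Fixing a compact generating set $K_C$ of $C$, a tube-lemma argument then shows that $h \mapsto e_h|_{K_C} \in A^{K_C}$ is locally constant on $H$; as $K$ is compact, this map takes only finitely many values on $K$. Since each homomorphism $e_h$ is determined by its restriction to the generating set $K_C$, the family $\{ e_h : h \in K \}$, and hence $\{ C_C(h) : h \in K \}$, is finite.

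Finally, $Z(H) \cap C = \bigcap_{h \in K} C_C(h)$ is then a finite intersection of subgroups each of index at most $|A|$ in $C$, hence has finite index in $C$, and therefore in $H$. Since $Z(H) \cap C \subseteq Z(H)$, this forces $[H : Z(H)] < \infty$, as desired. I expect the only real subtlety to be the finiteness of $\{ C_C(h) : h \in K \}$: this is precisely where compact generation is indispensable, since the conclusion fails for general (e.g.\ infinitely generated) groups with finite derived subgroup, where infinitely many distinct commutator homomorphisms can arise.
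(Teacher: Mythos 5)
Your proof is correct and rests on the same ingredients as the paper's argument: discreteness of the finite derived subgroup, compact generation of a finite-index subgroup, and a compactness/local-constancy argument that reduces $\bigcap_{k\in K}C_H(k)$ to a finite intersection of finite-index centralizers of generators. The only organizational difference is that the paper localizes in the second commutator variable (producing, via a sequential argument, a neighborhood $U_x$ of each generator $x$ that commutes with the whole finite-index centralizer $H_x$, then taking a finite subcover of $K$), whereas you first pass to $C_H([H,H])$ so that $x\mapsto[h,x]$ becomes a homomorphism and then localize in $h$; both variants are sound.
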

\begin{proof}
Let $K \subseteq H$ be a compact generating set.  For $x \in K$ consider the orbit of $x$ under conjugation by $H$: $h \mapsto hxh^{-1}$.  Since $[H,H]$ is finite, $hxh^{-1}x^{-1}$ takes on only finitely many values, so for each $x$, the orbit $\{ hxh^{-1} : h \in H \}$ is finite.  Therefore $H_{x} = \{ h \in H : hxh^{-1} = x \}$ has finite index in $H$.

Each $H_{x}$ is compactly generated since $H$ is.  Let $Q_{x} \subseteq H_{x}$ be a compact generating set. For $q \in Q_{x}$ observe that $qxq^{-1}x^{-1} = e$.  By the continuity of the action of $H$ on itself there is then an open neighborhood $U_{x}$ of $x$ such that $qyq^{-1}y^{-1} = e$ for all $q \in Q_{x}$ and all $y \in U_{x}$.  This can be seen as follows: if no such neighborhood exists then there exists $x_{n} \to x$ and $q_{n} \in Q_x$ such that $q_{n}x_{n}q_{n}^{-1}x_{n}^{-1} \ne e$.  Since $q_{n}x_{n}q_{n}^{-1}x_{n}^{-1} \in [H,H]$ is a finite set there is a subsequence on which $q_{n}x_{n}q_{n}^{-1}x_{n}^{-1} = z \ne e$ is constant.  Take a further subsequence along which $q_{n} \to q \in Q_{x}$ (compactness of $Q_{x}$).  Then $q_{n}x_{n}q_{n}^{-1}x_{n}^{-1} \to qxq^{-1}x^{-1}$ and $q_{n}x_{n}q_{n}^{-1}x_{n}^{-1} = z$ hence $qxq^{-1}x^{-1} = z \ne e$ contradicting that $q \in H_{x}$.

Therefore, for all $x \in K$ there is an open neighborhood $U_{x}$ of $x$ such that for all $q \in Q_{x}$ and all $y \in U_{x}$ we have $qyq^{-1}y^{-1} = e$.  Since $Q_{x}$ generates $H_{x}$ this means that $U_{x}$ commutes with $H_{x}$.  Now $K \subseteq \bigcup_{x \in K} U_{x}$ is an open cover of a compact set hence there is a finite subcover: $K \subseteq \bigcup_{j=1}^{\ell} U_{x_{j}}$ for some $x_{1},\ldots,x_{\ell} \in K$.  Let
$H_{0} = \bigcap_{j=1}^{\ell} H_{x_{j}}$.
Then $H_{0}$ commutes with $U_{x_{1}},\ldots,U_{x_{\ell}}$ hence it commutes with $K$ and therefore $H_{0}$ commutes with all of $H$.  Now $H_{0}$ has finite index in $H$ since it is a finite intersection of finite index subgroups of it, hence $H_{0} \subseteq Z(H)$ and the latter has finite index, as claimed.
\end{proof}

In the rest of this section we prove Proposition \ref{P:2}. This is done 
in two independent parts: the ``amenability half'' and the ``property $(T)$ 
half'', 
which are Propositions \ref{P:amenhalf} and \ref{P:Thalf} below. 

\begin{proof}[Proof of Proposition \ref{P:2} from Propositions \ref{P:amenhalf} and \ref{P:Thalf} below] The group
$\Lambda / N$ has property $(T)$ by Proposition \ref{P:Thalf} and is amenable by Proposition \ref{P:amenhalf}, hence it is finite.
\end{proof}

\subsection{The Amenability Half}

\begin{proposition}\label{P:amenhalf}
Let $G$ be a locally compact second countable group and let $\Gamma < G$ be a lattice in $G$.  Let $\Lambda < G$ be a dense subgroup that contains and commensurates $\Gamma$.

Let $N$ be a normal subgroup of $\Lambda$ such that $\overline{N}$ is co-compact in $G$, and such that
 $\Gamma$ maps onto $\Lambda / N$ via the coset map.  Then $\Lambda / N$ is amenable.
 \end{proposition}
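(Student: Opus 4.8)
The plan is to establish amenability of $\Lambda/N$ by exhibiting a $\Lambda/N$-action that is amenable in the sense of Zimmer and simultaneously has no nontrivial invariants, or more directly by producing an invariant mean. The key structural input is the Contractive Factor Theorem (Theorem~\ref{T:factor}) together with Proposition~\ref{P:latticecontractive}: the $\Gamma$-action on (a quotient of) the Poisson boundary $B$ of $G$ is contractive. Since $\overline{N}$ is cocompact in $G$, the quotient $G/\overline{N}$ is compact, and the hypothesis that $\Gamma$ surjects onto $\Lambda/N$ lets us transfer the group-theoretic surjectivity into a statement about boundary maps.

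\emph{First} I would set up the boundary. Let $\mu$ be a symmetric, admissible probability measure on $G$ with support generating $G$, and let $(B,\beta)$ be the Poisson boundary of $(G,\mu)$. By Proposition~\ref{P:latticecontractive} the restricted $\Gamma$-action on $(B,\beta)$ is contractive. \emph{Next}, the essential point is to understand the space of $N$-ergodic components, or equivalently the $L^\infty(B)^{N}$ fixed-point algebra, as a $\Lambda/N$-space. The natural candidate is the quotient boundary $(B/N, \bar\beta)$ obtained from the $N$-invariant functions; because $N\normal\Lambda$, this quotient carries a $\Lambda$-action that factors through $\Lambda/N$. \emph{Then} I would invoke the Contractive Factor Theorem: the composite $\Gamma$-map $(B,\beta)\to (B/N,\bar\beta)$, whose target is a priori only a $\Lambda$-space, extends to a genuine $G$-space $(Y',\eta')$ with a $G$-map from $(B,\beta)$, and $\Lambda$ acts on $Y'$ through its inclusion in $G$. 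The crucial consequence is that the $\Lambda/N$-action on $Y'$ is the restriction of a $G$-action, hence is the action on a $G$-boundary, which is amenable in Zimmer's sense.

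\emph{The mechanism for amenability} is then the following: $Y'$ is a $G$-boundary quotient, so the $G$-action on $Y'$ is amenable; since $N$ acts essentially trivially on $Y'$ (being the kernel of the factorization) while $\Gamma$ surjects onto $\Lambda/N$, the $\Lambda/N$-action on $Y'$ inherits the amenability. Combined with the fact that $Y'$ supports a fixed-point (or the action is contractive, hence has a unique stationary measure which is a point mass under the limit dynamics), one extracts an invariant mean on $\Lambda/N$. Concretely, I expect to use that a contractive $G$-space on which $\Lambda/N$ acts amenably with a $\Lambda/N$-fixed measure forces $\Lambda/N$ to be amenable, because amenability of an action together with a genuinely invariant measure pushes down to amenability of the acting group.

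\textbf{The main obstacle} I anticipate is the transfer of the boundary map's equivariance from $\Gamma$ to $\Lambda$ in a way that faithfully records $\Lambda/N$ rather than $\Lambda$ or $G$. The Contractive Factor Theorem produces a $G$-extension, but what we need is that $N$ acts trivially on the extended space $Y'$ so that the residual action is honestly that of $\Lambda/N$; verifying that the $N$-invariance built into $B/N$ survives the Mackey point-realization and the extension to $G$ is the delicate step. A secondary difficulty is ensuring the quotient $B/N$ is nontrivial and carries the surjectivity hypothesis correctly — here the assumption that $\Gamma$ maps \emph{onto} $\Lambda/N$ is what guarantees that the $\Gamma$-dynamics already sees all of $\Lambda/N$, so that amenability of the $G$-boundary action descends to the quotient group rather than evaporating. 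I would expect the proof to hinge on combining Zimmer-amenability of boundary actions with the rigidity forced by contractivity (Proposition~\ref{P:contractivemapunique}), exactly as these were assembled in the Factor Theorem.
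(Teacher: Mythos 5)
Your proposal circles the right ingredients (Poisson boundary, contractivity of the $\Gamma$-action, the Contractive Factor Theorem, compactness of $G/\overline{N}$) but the assembly has two genuine gaps. First, the target of your factor map is wrong: you map the boundary onto its algebra of $N$-invariants $B/N$, but that space may well be trivial (the $N$-action on $B$ can be ergodic, since $\overline{N}$ is cocompact), and in any case it does not encode the actions of $\Lambda/N$ on arbitrary compact spaces, which is what amenability of the countable group $\Lambda/N$ requires. The paper instead fixes an arbitrary compact metric $\Lambda/N$-space $Z$ and uses Zimmer-amenability of the $\Gamma$-action on the boundary $(X,\nu)$ to produce a $\Gamma$-equivariant map $\varphi:X\to P(Z)$; the hypothesis that $\Gamma$ surjects onto $\Lambda/N$, together with the triviality of the $N$-action on $Z$, upgrades $\varphi_{*}\nu$ to a $\Lambda$-quasi-invariant measure, and only then is the Contractive Factor Theorem invoked, with target $Y=P(Z)$.

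Second, your mechanism for extracting invariance does not work. The claim that the extended space $Y^{\prime}$ is an amenable $G$-space because it is ``a $G$-boundary quotient'' is false: Zimmer-amenability does not pass to factors (the one-point factor of the boundary is an amenable $G$-space only when $G$ itself is amenable). Contractivity cannot supply the invariant measure either --- contractive spaces are the extreme opposite of measure-preserving. The step you are missing is the averaging over the compact quotient: since $N$ acts trivially on $P(Z)$, $\overline{N}$ acts trivially on $Y^{\prime}$, so the $G$-action on $Y^{\prime}$ factors through the compact group $Q=G/\overline{N}$; convolving $\eta^{\prime}$ with the normalized Haar measure of $Q$ produces a $G$-invariant measure $\eta^{\prime\prime}$ in the same measure class, whose pullback to $P(Z)$ is $\Lambda$-invariant, and whose barycenter is the desired $\Lambda/N$-invariant measure on $Z$. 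This is where the cocompactness of $\overline{N}$ --- which you state but never actually use --- enters the argument.
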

 \begin{proof}
Since $\Lambda / N$ is (second) countable, it is amenable if for any 
compact {\it metric} space on which $\Lambda / N$ acts continuously, there is a $\Lambda / N$ invariant probability measure.
Let $Z$ be such a space, viewed as a  $\Lambda$-space with trivial action of 
$N$.

Let $(X,\nu)$ be the Poisson boundary of $G$ (with respect to any symmetric measure with support generating $G$).
By Proposition \ref{P:latticecontractive}, the action of $\Gamma$ on $(X,\nu)$ is contractive.
The $G$-action on $(X,\nu)$ is amenable, hence also that 
of its closed subgroup $\Gamma$ \cite{Zi84}.
Let then $\varphi : X \to P(Z)$ be a measurable $\Gamma$-equivariant map .  Let $Y = P(Z)$ and $\eta = \varphi_{*}\nu \in P(Y)$ so that $\varphi: (X,\nu) \to (Y, \eta)$ is a $\Gamma$-map.

By hypothesis, $\Gamma$ maps onto $\Lambda / N$ via the coset map $\gamma \mapsto \gamma N$ so for any $\lambda \in \Lambda$ there is some $\gamma \in \Gamma$ such that $\gamma N = \lambda N$.  Since $N$ acts trivially on $Z$, we have $\lambda \eta = \gamma \eta$ and therefore the $\Gamma$-quasi-invariance of $\eta$ implies $\Lambda$-quasi-invariance, so $(Y,\eta)$ is a $\Lambda$-space.

By the Contractive Factor Theorem (Theorem \ref{T:factor}), $\varphi$ extends to a $G$-map to a $\Lambda$-isomorphic $G$-space $(Y^{\prime},\eta^{\prime})$.  Since $N$ acts trivially on $Z$ the same is true on $Y = P(Z)$ and therefore $\overline{N}$ acts trivially on $Y^{\prime}$.  As $\eta$ is invariant under $N$, $\eta^{\prime}$ is $\overline{N}$-invariant.

Let $Q = G / \overline{N}$.  Then $Q$ is a compact group.  Since $\eta^{\prime}$ is quasi-invariant under $G$ it also is under $Q$.  Let $m$ be the Haar measure on $Q$ normalized to be a probability measure and set $\eta^{\prime\prime} = m * \eta^{\prime}$.  Then $\eta^{\prime\prime}$ is in the same measure class as $\eta^{\prime}$, and $\eta^{\prime\prime}$ is $Q$-invariant.  Therefore $\eta^{\prime\prime}$ is $G$-invariant since $\overline{N} \normal G$ and $\eta^{\prime}$ is $\overline{N}$-invariant.

Let $\eta^{\prime\prime\prime}$ be the isomorphic image of $\eta^{\prime\prime}$ on $Y$.  So $\eta^{\prime\prime\prime}$ is a $\Lambda$-invariant probability measure on $Y = P(Z)$.  Take $\rho$ to be the barycenter of $\eta^{\prime\prime\prime}$: $\rho = \int_{P(Z)} \zeta~d\eta^{\prime\prime\prime}(\zeta)$.
Then $\rho \in P(Z)$ is $\Lambda$-invariant since
$\lambda \rho = \int_{P(Z)} \lambda\zeta~d\eta^{\prime\prime\prime}(\zeta)
= \int_{P(Z)} \zeta~d\lambda\eta^{\prime\prime\prime}(\zeta)
= \rho$.  Hence $\rho$ is a $\Lambda / N$-invariant probability measure on $Z$
and the proof is complete.
\end{proof}

\subsection[The Property (T) Half]{The Property $(T)$ Half}\label{S:Thalf}

The requirement that $\Gamma$ be square-integrable in the main Theorem is only necessary for the property $(T)$ half of the proof. Recall that if $\Gamma$ is a finitely generated lattice in a locally compact group $G$ then $\Gamma$ is {\bf square integrable} when there exists a fundamental domain $F$ for $G / \Gamma$ such that
\[
\int_{F} | \alpha(g,x) |^{2}~dm(x) < \infty
\]
where $\alpha : G \times F \to \Gamma$ is the cocycle given by $\alpha(g,x) = \gamma$ if and only if $gx\gamma \in F$, $| \cdot |$ denotes the word length in $\Gamma$ (the choice of generating set will not affect the finiteness of the integral), and $m$ is the finite Haar measure on $F$.  
This requirement is crucial in order to be able to define a natural ($L^2$-)induction 
map (going from $\Gamma$ to $G$) on the first cohomology with unitary coefficients, and is imposed in order to utilize the rigidity results of \cite{Sha00}
and \cite{GKM08}. 
Of course, all uniform lattices are square integrable. Non-uniform lattices are known to be square integrable in higher-rank semisimple groups \cite{Sha00}, rank-one simple Lie groups not locally isomorphic to $SL_{2}(\mathbb{R})$ or $SL_{2}(\mathbb{C})$ \cite{Sha00b}, and in the Kac-Moody case \cite{remy3}.

Before stating the property $(T)$ half, we derive a consequence of a
result of Gelander-Karlsson-Margulis \cite{GKM08}:
\begin{proposition}\label{P:GKM}
Let $G$ be a locally compact second countable group and let $\Gamma < G$ be a square-integrable lattice in $G$.  Let $\Lambda < G$ be a dense subgroup that contains and commensurates $\Gamma$.  Let $\pi : \Lambda \to \mathcal{H}$ be an
 irreducible unitary representation of $\Lambda$ on a Hilbert space 
$\mathcal{H}$ 
such that every finite index subgroup of $\Gamma$ does not admit almost 
invariant vectors for $\pi$.  Then any affine action of $\Lambda$ on $\mathcal{H}$ with no fixed points, whose linear part is given by $\pi$, 
extends to a continuous affine isometric $G$-action on $\mathcal{H}$.
\end{proposition}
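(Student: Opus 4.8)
The plan is to argue through reduced first cohomology. Write the affine action as $\lambda\cdot v=\pi(\lambda)v+b(\lambda)$ for a cocycle $b\in Z^{1}(\Lambda,\pi)$; the absence of a fixed point means exactly that $b\notin B^{1}(\Lambda,\pi)$. My first goal is to show that the restricted cocycle $b|_{\Gamma}$ represents a \emph{nonzero reduced} class in $\overline{H}^{1}(\Gamma,\pi|_{\Gamma})$, and then to feed this class into the $L^{2}$-induction of \cite{Sha00} together with the harmonic-cocycle result of \cite{GKM08} to obtain the desired extension to $G$. It is exactly at this induction step that the square integrability of $\Gamma$ is used.

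To produce the nonzero class I would first record the consequences of the spectral-gap hypothesis: every finite-index subgroup $\Gamma_{0}\le\Gamma$ has $\mathcal{H}^{\Gamma_{0}}=0$ (no invariant vectors, since there are not even almost invariant ones), and the absence of almost invariant vectors forces $B^{1}(\Gamma_{0},\pi|_{\Gamma_{0}})$ to be closed, so that reduced and ordinary cohomology coincide on such subgroups. It then suffices to rule out that $b|_{\Gamma}$ is a genuine coboundary. If it were, $\Gamma$ would fix a vector $v_{0}$. Given $\lambda\in\Lambda$, commensuration supplies a finite-index subgroup $\Gamma_{0}\le\Gamma$ with $\lambda^{-1}\Gamma_{0}\lambda\le\Gamma$, and then both $v_{0}$ and $\lambda\cdot v_{0}$ are fixed by the affine $\Gamma_{0}$-action. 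Since $\mathcal{H}^{\Gamma_{0}}=0$, an affine isometric $\Gamma_{0}$-action has at most one fixed point, so $\lambda\cdot v_{0}=v_{0}$; as $\lambda$ ranges over $\Lambda$ this makes $v_{0}$ a $\Lambda$-fixed point, a contradiction. Hence $[b|_{\Gamma}]\neq 0$ in $\overline{H}^{1}(\Gamma,\pi|_{\Gamma})$.

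With a nonzero reduced class in hand, I would invoke the theorem of Gelander--Karlsson--Margulis \cite{GKM08}: integrating the harmonic representative of $[b|_{\Gamma}]$ against the square-integrable fundamental domain yields a continuous affine isometric action of $G$ on $\mathcal{H}$, whose linear part is a continuous unitary representation $\tilde\pi$ of $G$ extending $\pi|_{\Gamma}$, and whose restriction to $\Gamma$ is, after translation by a fixed vector, the original affine $\Gamma$-action. Here the irreducibility of $\pi$ is what guarantees that the representation produced is genuinely an extension of $\pi$ and not of a proper sub- or super-representation.

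The remaining, and I expect \emph{hardest}, step is to verify that this continuous $G$-action restricts on the dense subgroup $\Lambda$ to the original $\Lambda$-action, not merely to the $\Gamma$-action from which it was built. Normalizing the extended action $g\cdot v=\tilde\pi(g)v+\tilde b(g)$ to coincide with $\alpha$ on $\Gamma$, one compares, for each $\lambda\in\Lambda$, the affine isometries $\alpha(\lambda)$ and $v\mapsto\tilde\pi(\lambda)v+\tilde b(\lambda)$. Using the commensuration relation $\lambda\gamma\lambda^{-1}\in\Gamma$ for $\gamma$ in a finite-index subgroup $\Gamma_{0}\le\Gamma$, both have the same conjugation action on $\alpha|_{\Gamma_{0}}$, so that $\tilde\pi(\lambda)^{-1}\pi(\lambda)$ commutes with $\pi(\Gamma_{0})$. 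One then wants to combine this with the density of $\Lambda$ in $G$ and the $\Lambda$-irreducibility of $\pi$ to force $\tilde\pi|_{\Lambda}=\pi$ and $\tilde b|_{\Lambda}=b$, so that the extended $G$-action truly restricts to the original affine $\Lambda$-action. Reconciling the cohomologically induced $G$-extension with the full commensuration action of $\Lambda$ is the crux of the argument, and is precisely the point where the irreducibility of $\pi$ is indispensable.
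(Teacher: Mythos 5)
There is a genuine gap, and you have located it yourself: the final step, reconciling the $G$-extension built from the $\Gamma$-action with the original $\Lambda$-action, is not actually carried out. Your route goes through the \emph{lattice} version of superrigidity: show $[b|_{\Gamma}]\neq 0$ in reduced cohomology (your argument for this, via closedness of $B^{1}$ under the spectral-gap hypothesis and the uniqueness of the $\Gamma_{0}$-fixed point, is fine), induce to $G$, and then try to check a posteriori that the resulting $G$-action restricts on $\Lambda$ to the given one. But the observation that $\tilde\pi(\lambda)^{-1}\pi(\lambda)$ commutes with $\pi(\Gamma_{0})$ does not close the argument: only $\pi|_{\Lambda}$ is assumed irreducible, so $\pi|_{\Gamma_{0}}$ may have a large commutant and Schur's lemma is unavailable; and even if one could force $\tilde\pi(\lambda)=\pi(\lambda)$ up to scalars, one would still have to match the translation parts $\tilde b(\lambda)$ and $b(\lambda)$. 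As written, the conclusion $\tilde\pi|_{\Lambda}=\pi$, $\tilde b|_{\Lambda}=b$ is asserted as a goal, not proved.

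The paper sidesteps this entirely by invoking the \emph{commensurator} superrigidity theorem of Gelander--Karlsson--Margulis (Theorem 8.1 of \cite{GKM08}, with Remark 8.9 for square integrable lattices), which is stated directly for the affine isometric action of $\Lambda$ and produces a continuous extension of the $\Lambda$-action to $G$ in one step; there is nothing left to reconcile. The hypotheses you need to verify for that theorem are exactly the two facts the paper checks: (a) the no-almost-invariant-vectors assumption on every finite index subgroup of $\Gamma$ is equivalent to properness of the displacement function for that subgroup (this also disposes of the ``no parallel orbits'' condition), and (b) irreducibility of $\pi$ together with the absence of fixed points gives $C$-minimality of the action, i.e.\ there is no proper closed convex invariant subset. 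This is where irreducibility is actually used -- not, as in your sketch, to identify the extended linear representation with $\pi$ afterwards. If you want to salvage your approach, you would essentially have to reprove the commensurator case of Theorem 8.1; it is cleaner to cite it in the form the paper does.
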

\begin{proof}
We want to apply Theorem 8.1 in \cite{GKM08} (and Remark 8.9 regarding square integrable lattices) to the $\Lambda$-action on $\mathcal{H}$. It is easy to check in complete generality that for any 
affine isometric action of a finitely generated group $\Gamma_0$ on a Hilbert
space $\mathcal{H}$, the displacement function relative to a generating set
of $\Gamma_0$ is proper 
(or goes to infinity in the terminology of \cite{GKM08}) precisely when the
linear part of the action of $\Gamma _0$ does not admit almost invariant vectors. Hence
our assumption on the finite index subgroups of $\Gamma$ translates to
the validity of the main assumption in that Theorem 
(and it takes care also of the ``no parallel orbits'' condition there). It is also
easily verified that the irreducibility of the
 unitary representation and the lack of fixed points for the affine action 
implies that this action is $C$-minimal (i.e., there is no closed, convex,
invariant, proper subset of the space).  
Hence Theorem 8.1 in \cite{GKM08} applies, and yields that 
the isometric $\Lambda$-action on $\mathcal{H}$ extends
continuously to $G$. By density of $\Lambda$, the $G$-action is also affine (and in particular its 
linear part extends $\pi$ continuously).
\end{proof}

We can now state and prove the property $(T)$ half of the main theorem:
\begin{proposition}\label{P:Thalf}
Let $\Gamma <_{c} \Lambda < G$ be as in Proposition \ref{P:amenhalf}, with $G$ compactly generated and $\Gamma$ (uniform or) square integrable. Let $N$ be a normal subgroup of $\Lambda$ such that
$\Gamma$ maps onto $\Lambda / N$ via the coset map,
and such that $\overline{[N,N]}$ (hence also $\overline N$) is co-compact in $G$.
Then $\Lambda / N$ has property $(T)$.
 \end{proposition}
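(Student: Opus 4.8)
The plan is to establish property $(T)$ for $Q:=\Lambda/N$ through the Delorme--Guichardet picture, i.e.\ by showing that every affine isometric action of $Q$ on a Hilbert space has a fixed point, while exploiting that $Q$ is finitely generated (being a quotient of the finitely generated $\Gamma$, since $\Gamma$ maps onto $Q$). By the reduction of property $(T)$ to irreducible coefficients for finitely generated groups \cite{Sha00}, it suffices to prove that $\overline{H}^{1}(Q,\pi)=0$ for every irreducible unitary representation $\pi$ of $Q$. I would therefore assume for contradiction that some irreducible $\pi$ carries a nonzero reduced $1$-cohomology class, represented by a cocycle $b$, so that the associated affine isometric $Q$-action has no fixed point.

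Pulling $\pi$ and $b$ back along the quotient map $p\colon\Lambda\to Q$ produces an irreducible unitary representation of $\Lambda$ (still written $\pi$) together with an affine isometric $\Lambda$-action with linear part $\pi$ and no fixed point; as everything factors through $Q$, the subgroup $N$ acts trivially, so both $\pi|_{N}$ and $b|_{N}$ are trivial. I would then split the argument according to whether $\pi$ admits almost invariant vectors upon restriction to finite index subgroups of $\Gamma$ (equivalently of $Q$, since finite index subgroups of $\Gamma$ map onto finite index subgroups of $Q$).

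In the generic case, when no finite index subgroup of $\Gamma$ has almost invariant vectors for $\pi$, the hypotheses of Proposition \ref{P:GKM} are satisfied, so the affine $\Lambda$-action extends to a continuous affine isometric action of $G$ on $\mathcal{H}$. Its restriction to $N$ is trivial, hence by continuity and density its restriction to $\overline{N}$ is trivial as well; the action therefore descends to the \emph{compact} group $G/\overline{N}$, which fixes the barycenter of any orbit. This produces a $G$-fixed, hence $\Lambda$-fixed, vector, contradicting the absence of fixed points. Note that this case uses only co-compactness of $\overline{N}$.

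The remaining case, when $\pi$ restricted to some finite index subgroup weakly contains the trivial representation, is where I expect the main difficulty, and where co-compactness of $\overline{[N,N]}$ (rather than merely of $\overline{N}$) is needed. Here the reduced-cohomology structure theory of \cite{Sha00} forces the irreducible $\pi$ to be finite dimensional, and after passing to a finite index subgroup the class is carried by a nonzero homomorphism from a finite index subgroup $\Gamma'<\Gamma$ into $\bbR$. Because the cocycle is pulled back from $Q$, its class is invariant under the commensuration action of $\Lambda$ on the reduced cohomology of $\Gamma$ and its finite index subgroups; the square-integrability of $\Gamma$ together with the commensurator superrigidity for reduced cohomology of \cite{Sha00} then lets me extend it to a continuous homomorphism $\widetilde{\beta}\colon G\to\bbR$. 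Any such homomorphism kills $\overline{[G,G]}\supseteq\overline{[N,N]}$, which is co-compact, so it factors through the compact group $G/\overline{[G,G]}$ and must vanish, contradicting that its restriction to $\Gamma'$ was nonzero. The crux throughout is the correct deployment of Shalom's machinery to realize the cohomology class by a commensuration-invariant object that can be induced to $G$; the irreducibility of $\pi$ and the square-integrability of $\Gamma$ are essential precisely at this step.
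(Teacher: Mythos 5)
Your overall strategy coincides with the paper's: reduce to irreducible coefficients via Shalom's criterion for finitely generated groups, split on whether finite index subgroups of $\Gamma$ admit almost invariant vectors for $\pi$, handle the first case by Proposition \ref{P:GKM} plus compactness of $G/\overline{N}$, and handle the second by producing a nonzero homomorphism to $\bbR$ on a finite index subgroup, extending it to (an open subgroup of) $G$ by square integrability, and killing it with the co-compactness of $\overline{[N,N]}$. The only reorganization is that the paper establishes finite-dimensionality of $\pi$ \emph{before} the dichotomy, whereas you place it inside the second case; that is harmless since Proposition \ref{P:GKM} does not require finite dimension.

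There is, however, one genuine gap: your assertion that in the second case ``the reduced-cohomology structure theory of \cite{Sha00} forces the irreducible $\pi$ to be finite dimensional.'' This is not a consequence of almost invariant vectors or of any general structure theorem -- irreducible representations with nonvanishing reduced $1$-cohomology are typically infinite-dimensional (free groups, rank one simple Lie groups). The actual mechanism, which is the content of the paper's first lemma, is specific to the present setup: since $\Gamma$ maps onto $\Lambda/N$, the class $b|_{\Gamma}$ is nonzero in $\overline{H^{1}}$, so Theorem 10.3 of \cite{Sha00} produces a nonzero $\Lambda$-invariant subspace on which the \emph{linear} part extends continuously to $G$; irreducibility promotes this to the whole space; and because $\pi(N)$ is trivial the extended representation factors through the compact group $G/\overline{N}$, forcing finite dimension. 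In particular the co-compactness of $\overline{N}$ is used essentially here, not only that of $\overline{[N,N]}$ at the end, contrary to your remark. A secondary (smaller) gap is the jump from ``some finite index subgroup has almost invariant vectors'' to ``the class is carried by a nonzero homomorphism $\Gamma'\to\bbR$'': one must first pass to a normal finite index $\Gamma_{0}\normal\Gamma$, use finite-dimensionality to convert almost invariant vectors into an actual $\Gamma_{0}$-invariant vector, and use irreducibility to conclude that $\Gamma_{0}$ acts trivially, so that $\pi(\Lambda)$ is finite and the cocycle restricted to the finite index kernel becomes an additive character. With these two steps supplied, your argument closes exactly as the paper's does.
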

 \begin{proof}
Recall that the reduced cohomology of $G$ with coefficients in $\pi$ (for a unitary representation $\pi : G \to \mathcal{U}(\mathcal{H})$ of $G$ on a Hilbert space) is $\overline{H^{1}}(G,\pi) = Z^{1}(G,\pi) / \overline{B^{1}(G,\pi)}$, the space of 1-cocycles modulo the closure of the subspace of 1-coboundaries (with respect to the topology of uniform convergence on compact sets).  The second 
author has shown \cite{Sha00} that property $(T)$ is equivalent to the vanishing of reduced cohomology for every {\it irreducible} unitary representation, provided the group is finitely (compactly) generated.
Note that while $\Lambda$ may not be finitely generated, since $\Gamma$ maps onto $\Lambda / N$ and $\Gamma$ is finitely generated, $\Lambda / N$ is finitely generated.  Let $\pi : \Lambda / N \to \mathcal{U}(\mathcal{H})$ be an irreducible unitary representation and suppose that $\overline{H^{1}}(\Lambda / N, \pi) \ne 0$.  We will obtain a contradiction to the existence of such a representation and therefore conclude that $\Lambda / N$ has property $(T)$.
Our method is to first show that such irreducible cohomological 
representation must be finite-dimensional, then show that in a 
cohomological finite-dimensional representation $\pi(\Gamma)$ must be finite, and finally show that finite representations support no 
cohomology.  These three steps appear as the three Lemmas below.
\end{proof}


\begin{lemma}
Let $\Gamma <_{c} \Lambda < G$ and $N$ be as in Proposition \ref{P:Thalf}.
Let $\pi :  \Lambda / N \to \mathcal{U}(\mathcal{H})$ be an irreducible unitary representation such that $\overline{H^{1}}(\Lambda / N, \pi) \ne 0$.  Then $\pi$ is finite-dimensional.
\end{lemma}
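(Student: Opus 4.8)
The plan is to argue by contradiction: assuming $\pi$ is infinite-dimensional, I will extend the affine action coming from a nonzero reduced cohomology class to the ambient group $G$ via Proposition \ref{P:GKM}, and then exploit the co-compactness of $\overline{N}$ to force $\pi$ to factor through a compact group, which is absurd for an infinite-dimensional irreducible representation. First I would pull $\pi$ back along the quotient $q:\Lambda\to\Lambda/N$ to regard it as a representation of $\Lambda$ that is trivial on $N$; since $q$ is surjective, $\pi$ remains irreducible as a $\Lambda$-representation. The hypothesis $\overline{H^{1}}(\Lambda/N,\pi)\neq0$ gives in particular $H^{1}(\Lambda/N,\pi)\neq0$, hence a fixed-point-free affine isometric action of $\Lambda/N$, and thus of $\Lambda$, whose linear part is $\pi$.

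To invoke Proposition \ref{P:GKM} I must verify its hypothesis that no finite-index subgroup of $\Gamma$ admits almost invariant vectors for $\pi$, and I expect this to be the main obstacle. I would establish it under the standing assumption that $\pi$ is infinite-dimensional. Suppose some finite-index subgroup of $\Gamma$ has almost invariant vectors; replacing it by its normal core and applying $q$, I obtain a finite-index normal subgroup $\Delta\trianglelefteq\Lambda/N$ for which $\pi|_{\Delta}$ has almost invariant vectors. By Clifford--Mackey theory for the finite quotient $(\Lambda/N)/\Delta$, the restriction decomposes as a finite direct sum $\pi|_{\Delta}=\bigoplus_{i}\sigma_{i}$ of irreducibles permuted transitively under conjugation; in particular the $\sigma_{i}$ are mutually conjugate, have a common dimension which is infinite (their finite sum is the infinite-dimensional space of $\pi$), and share the same reduced cohomology and the same weak-containment status for the trivial representation. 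A standard norm estimate shows that $1_{\Delta}\prec\bigoplus_{i}\sigma_{i}$ forces $1_{\Delta}\prec\sigma_{i_{0}}$ for some, hence by conjugacy every, index. Since $\Delta$ has finite index, restriction is injective on reduced cohomology, so $\overline{H^{1}}(\Delta,\pi|_{\Delta})\neq0$, and as reduced cohomology commutes with finite direct sums this yields $\overline{H^{1}}(\Delta,\sigma_{i})\neq0$. Thus $\sigma_{i_{0}}$ would be an infinite-dimensional irreducible representation of the finitely generated group $\Delta$ that weakly contains the trivial representation and carries nonzero reduced cohomology, contradicting the analysis of reduced cohomology of irreducible representations in \cite{Sha00}. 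Hence, granting $\pi$ infinite-dimensional, no finite-index subgroup of $\Gamma$ has almost invariant vectors.

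With this in hand, Proposition \ref{P:GKM} applies and extends the fixed-point-free affine $\Lambda$-action to a continuous affine isometric $G$-action, whose linear part is a continuous unitary representation $\widehat{\pi}$ of $G$ restricting to $\pi$ on $\Lambda$. Because $N\subseteq\ker\pi$ and $\ker\widehat{\pi}$ is closed, I get $\overline{N}\subseteq\ker\widehat{\pi}$, so $\widehat{\pi}$ factors through $G/\overline{N}$, which is compact since $\overline{N}$ is co-compact. Any closed $G$-invariant subspace is $\Lambda$-invariant, so $\widehat{\pi}$ is irreducible; but an irreducible unitary representation of a compact group is finite-dimensional, forcing $\pi$ to be finite-dimensional. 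This contradicts the assumption, completing the proof. The heart of the matter is the cohomological dichotomy invoked above, namely that an infinite-dimensional irreducible representation with nonzero reduced cohomology cannot weakly contain the trivial representation; this is precisely where the interplay between irreducibility and reduced cohomology from \cite{Sha00} is essential, and it is what lets the extension result of \cite{GKM08} be applied.
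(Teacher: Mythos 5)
Your overall architecture (pull back to $\Lambda$, extend to $G$, factor through the compact group $G/\overline{N}$) is the right shape, but the route you take to the extension has a genuine gap. You try to apply Proposition \ref{P:GKM}, whose hypothesis is that no finite index subgroup of $\Gamma$ has almost invariant vectors for $\pi$, and you propose to verify this hypothesis by citing a ``cohomological dichotomy'': that an infinite-dimensional irreducible representation of a finitely generated group cannot simultaneously weakly contain the trivial representation and have nonzero reduced first cohomology. No such theorem appears in \cite{Sha00} or elsewhere in the literature, and it is not true in the generality you need: for a finitely generated group such as a free or surface group, $\overline{H^{1}}(\Gamma,\sigma)\ne 0$ for essentially every nontrivial unitary representation $\sigma$ (the space of harmonic cocycles is nonzero for dimension reasons), including infinite-dimensional irreducible ones with almost invariant vectors. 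Since $\Delta$ is just some finitely generated quotient of $\Gamma$ at this stage, nothing rules this situation out, so your contradiction never materializes. A secondary problem is the Clifford-theoretic step: for an \emph{infinite-dimensional} unitary representation of a discrete group, the restriction to a finite index normal subgroup need not decompose as a finite direct sum of irreducibles (Clifford theory in this setting only controls quasi-equivalence classes of factor summands), so the decomposition $\pi|_{\Delta}=\bigoplus_i\sigma_i$ into irreducibles is itself unjustified.

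The paper sidesteps all of this. It does \emph{not} use Proposition \ref{P:GKM} for this lemma; it invokes Theorem 10.3 of \cite{Sha00}, which requires only that $b|_{\Gamma}\ne[0]$ in reduced cohomology (guaranteed here because $\Gamma$ surjects onto $\Lambda/N$, so restriction to $\Gamma$ is injective on $\overline{H^{1}}$) and produces a nonzero $\Lambda$-invariant subspace on which the \emph{linear} representation extends continuously to $G$ --- no spectral gap hypothesis is needed. Irreducibility of $\pi$ then forces the extension to live on all of $\mathcal{H}$, triviality on $N$ passes to $\overline{N}$, and co-compactness of $\overline{N}$ makes $\pi$ an irreducible representation of a compact group, hence finite-dimensional. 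The Gelander--Karlsson--Margulis input is reserved for Lemma \ref{L:2}, where finite-dimensionality (already established) is exactly what converts ``almost invariant vectors'' into ``invariant vectors'' by compactness; trying to deploy it before finite-dimensionality is known is what creates the unverifiable hypothesis in your argument.
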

\begin{proof}
Treat $\pi$ as a representation of $\Lambda$ with $\pi(N)$ being trivial.  Let $b \in Z^{1}(\Lambda,\pi)$ such that $b \ne [0]$ in $\overline{H^{1}}(\Lambda, \pi)$.  Then $b |_{\Gamma} \ne [0]$ in $\overline{H^{1}}(\Lambda,\pi)$ since $\Gamma$ maps onto $\Lambda / N$.
The second author has shown (Theorem 10.3 in \cite{Sha00}) that the latter
implies that there exists a nonzero $\Lambda$-invariant subspace on which the 
{\it linear} (but not necessarily the isometric -- this is why the Gelander-Karlsson-Margulis result will be needed below) 
$\Lambda$-action $\pi$ extends continuously to a unitary representation of $G$.  As $\pi$ is irreducible for $\Lambda$, the action extends on the entire space.  Continue to denote this new $G$-representation by $\pi$.
Since $\pi(N)$ is trivial, $\pi(\overline{N})$ is too.  
Therefore $\pi$ is in fact an irreducible representation of the compact 
group $G / \overline{N}$ (as $\overline{N}$ is co-compact), and is therefore finite-dimensional.
\end{proof}

\begin{lemma}\label{L:2}
Let $\Gamma <_{c} \Lambda < G$ and $N$ be as in Proposition \ref{P:Thalf}.
Let $\pi : \Lambda / N \to \mathcal{H}$ be a finite-dimensional, irreducible, unitary representation, such that $\overline{H^{1}}(\Lambda / N, \pi) \ne 0$.  Then $\pi(\Gamma)= \pi(\Lambda)$ is finite.
\end{lemma}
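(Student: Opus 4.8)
The plan is as follows. First observe that $\pi(\Gamma)=\pi(\Lambda)$ is immediate, since $\Gamma$ surjects onto $\Lambda/N$; so the real content is that this common image is finite. Because $\pi$ is finite-dimensional (by the previous Lemma), the unit sphere of $\mathcal H$ is compact, and hence for any finitely generated subgroup $\Gamma_{0}<\Gamma$ the representation $\pi|_{\Gamma_{0}}$ admits almost invariant vectors if and only if it admits a genuine nonzero invariant vector. I would therefore split into two cases according to whether \emph{some} finite index subgroup $\Gamma_{0}<\Gamma$ has a nonzero $\pi(\Gamma_{0})$-invariant vector. Throughout, fix a cocycle $b\in Z^{1}(\Lambda/N,\pi)$, viewed as a cocycle on $\Lambda$ that vanishes on $N$, with $[b]\neq 0$ in $\overline{H^{1}}$; the associated affine isometric $\Lambda$-action with linear part $\pi$ has \emph{no} fixed point, since a fixed point would exhibit $b$ as a coboundary.

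The first step is to rule out the case where no finite index subgroup of $\Gamma$ admits almost invariant vectors. In that case the hypotheses of Proposition \ref{P:GKM} are exactly met, so the fixed-point-free affine $\Lambda$-action extends to a continuous affine isometric $G$-action on $\mathcal H$. Since $b|_{N}=0$ and $\pi(N)=\mathrm{Id}$, the subgroup $N$ acts as the identity affine map; as $N$ is dense in $\overline{N}$ and the extended action is continuous, $\overline{N}$ also acts trivially, so the affine $G$-action factors through the compact group $Q=G/\overline{N}$. A continuous affine isometric action of a compact group fixes a point (for instance the average $\int_{Q} q\cdot 0\,dq$ over a Haar probability measure of the orbit of $0$), and this point is in particular fixed by $\Lambda<G$, contradicting fixed-point-freeness. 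Hence this case cannot occur, and some finite index $\Gamma_{0}<\Gamma$ carries a nonzero $\pi(\Gamma_{0})$-invariant vector $v$.

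The remaining step converts this invariant vector into finiteness of the image. Let $L=\overline{\pi(\Lambda)}\subseteq\mathcal U(\mathcal H)$, a compact Lie group on which the tautological representation is faithful and, since $\pi(\Lambda)$ is dense and $\pi$ is irreducible, irreducible. As $\Gamma_{0}$ has finite index in $\Gamma$ and $\pi(\Gamma)=\pi(\Lambda)$, the closed subgroup $L_{0}=\overline{\pi(\Gamma_{0})}$ has finite index, hence is open in $L$, and therefore contains the identity component $L^{\circ}$. Since $L_{0}$ fixes $v$, so does $L^{\circ}$. But $L^{\circ}$ is normal in $L$, so its fixed space $\mathcal H^{L^{\circ}}$ is a nonzero $L$-invariant subspace; by irreducibility $\mathcal H^{L^{\circ}}=\mathcal H$, and by faithfulness $L^{\circ}=\{e\}$. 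A compact Lie group with trivial identity component is finite, so $L$, and a fortiori $\pi(\Lambda)$, is finite.

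I expect the main obstacle to be the first step, that is, making the case analysis watertight: one must correctly match the negated almost-invariant-vector hypothesis to Proposition \ref{P:GKM}, verify that the extended affine action genuinely kills all of $\overline{N}$ (not merely $N$) so that it descends to the \emph{compact} quotient $G/\overline{N}$, and then use that compactness to force a global fixed point in contradiction with $[b]\neq 0$. The finite-dimensionality of $\pi$ supplied by the preceding Lemma is precisely what makes both the dichotomy on almost invariant vectors and the passage to the compact Lie group $L$ available.
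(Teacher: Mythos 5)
Your proposal is correct and follows essentially the same route as the paper: the dichotomy on almost invariant vectors for finite index subgroups of $\Gamma$, the use of Proposition \ref{P:GKM} to extend the fixed-point-free affine action to $G$ and derive a contradiction from the compactness of $G/\overline{N}$, and then the conversion of an invariant vector for a finite index subgroup into finiteness of the image via irreducibility. The only (cosmetic) difference is in the last step, where the paper passes to a normal finite index subgroup $\Gamma_{0}\normal\Gamma$ and uses that its fixed space is $\Gamma$-invariant, whereas you work with the identity component of the compact Lie group $\overline{\pi(\Lambda)}$; both arguments are valid and of the same depth.
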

\begin{proof}
Treat $\pi$ as a $\Lambda$-representation, and let $b$ be the
assumed non-(cohomologically-)zero cocycle on $ \Lambda / N$. As usual,
$\pi$ and $b$ determine an affine, fixed point free isometric $\Lambda$-action
via  $\lambda v := \pi(\lambda)v + b(\lambda)$, having $N$ in its kernel.
  Since $\Gamma$ maps onto $\Lambda / N$, $\pi$ is an irreducible $\Gamma$-representation.
 We first show that {\it some} finite index subgroup $\Gamma_{0}<\Gamma$ 
has almost invariant vectors for $\pi$. Indeed, if this is not the case
then we are exactly in position to invoke  Proposition \ref{P:GKM}, which
yields that   the affine action of $\Lambda$ extends to a continuous affine 
isometric action of $G$.  Now $N$ is in the kernel of the isometric $\Lambda$-action, hence $\overline{N}$ is in the kernel of the $G$-action.  
But $\overline{N}$ is co-compact so the $G$-action factors through an 
action of a compact group and hence has a fixed point.  
Thus the $\Lambda$-action has a fixed point,
which contradicts our assumption on the cocycle $b$.

Let then  $\Gamma_{0}< \Gamma$ be a finite index subgroup having 
almost invariant vectors for $\pi$.  By passing to a finite index subgroup of it we may of course assume it is also normal in $\Gamma$. 
 As $\pi$ is finite-dimensional there is then a non-zero invariant vector 
for $\Gamma_{0}$ (compactness). The normality of $\Gamma_0$ implies  that 
for any $\Gamma$-action the set of $\Gamma_0$-fixed points is 
$\Gamma$-invariant. In our setting it follows from irreducibility 
that the $\Gamma$-invariant subspace of $\Gamma_0$-fixed vectors is (non-zero
and hence) all of $ \mathcal{H}$. Thus $\Gamma _0$ acts trivially and 
$\pi (\Gamma)= \pi (\Lambda)$ is finite, as claimed.
\end{proof}

\begin{lemma}
Let $\Gamma <_{c} \Lambda < G$ and $N$ be as in Proposition \ref{P:Thalf}.  Let $\pi : \Lambda / N \to \mathcal{H}$ be a finite unitary representation. 
Then $\overline{H^{1}}(\Lambda / N, \pi) = 0$.  

\end{lemma}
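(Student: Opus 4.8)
The plan is to reduce the vanishing of reduced cohomology to the vanishing of an ordinary homomorphism on a finite-index sublattice, and then to eliminate the latter using the hypothesis that $\overline{[N,N]}$ is co-compact. Since $\Lambda/N$ is finitely generated (being a quotient of the finitely generated $\Gamma$) and $\pi$ is finite-dimensional, the cocycle space $Z^{1}(\Lambda/N,\pi)$ is finite-dimensional, so $\overline{B^{1}}=B^{1}$ and $\overline{H^{1}}(\Lambda/N,\pi)=H^{1}(\Lambda/N,\pi)$. It therefore suffices to show that the ordinary cohomology $H^{1}(\Lambda/N,\pi)$ vanishes.

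Next I would transport the problem to $\Gamma$. Because $\Gamma$ maps onto $\Lambda/N$, the inflation map $H^{1}(\Lambda/N,\pi)\to H^{1}(\Gamma,\pi)$ is injective in degree one. Since $\pi$ has finite image, $\Gamma_{0}:=\ker(\pi|_{\Gamma})$ is a finite-index (hence, one should check, again square integrable) subgroup of $\Gamma$ on which $\pi$ is trivial. As the coefficients form a complex vector space, the composition $\mathrm{cor}\circ\mathrm{res}$ is multiplication by $[\Gamma:\Gamma_{0}]$ and is invertible, so restriction $H^{1}(\Gamma,\pi)\to H^{1}(\Gamma_{0},\pi|_{\Gamma_{0}})$ is injective as well. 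As $\pi|_{\Gamma_{0}}$ is trivial, $H^{1}(\Gamma_{0},\pi|_{\Gamma_{0}})=\mathrm{Hom}(\Gamma_{0},\mathcal{H})$. Composing, any class in $H^{1}(\Lambda/N,\pi)$ is detected by the homomorphism $\beta:\Gamma_{0}\to\mathcal{H}$ it induces by restriction, and it suffices to prove that every such $\beta$ vanishes.

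The analytic heart of the argument is the extension of $\beta$ to $G$. Viewing $\beta$ as a reduced $1$-cocycle of $\Gamma_{0}$ valued in the trivial representation on $\mathcal{H}$, the cohomology induction/restriction isomorphism for square integrable lattices of \cite{Sha00} shows that $\beta$ is the restriction of a continuous homomorphism $\hat{\beta}:G\to\mathcal{H}$. Once this is in hand the conclusion is formal: being a homomorphism into an abelian group, $\hat{\beta}$ kills $\overline{[G,G]}$, and in particular the closed normal subgroup $\overline{[N,N]}\normal G$ (normal since $[N,N]\normal\Lambda$ and $\Lambda$ is dense in $G$). As $\overline{[N,N]}$ is co-compact, $\hat{\beta}$ factors through the compact group $G/\overline{[N,N]}$, and a continuous homomorphism from a compact group into the additive group of $\mathcal{H}$ has bounded, hence trivial, image. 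Thus $\hat{\beta}=0$, so $\beta=0$, which forces $H^{1}(\Lambda/N,\pi)=0$ and completes the proof.

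I expect the extension step to be the only genuine obstacle. The reduction to trivial coefficients and the final compactness argument are formal, but pulling $\beta$ back to a \emph{continuous} homomorphism on $G$ is precisely where the square integrability of $\Gamma$ (through the induction machinery of \cite{Sha00}) must be used; one should also verify that square integrability is inherited by the finite-index subgroup $\Gamma_{0}$, and that the co-compactness of $\overline{[N,N]}$ — rather than merely of $\overline{N}$ — is what makes the abelianization of $G$ compact and thereby kills the extended homomorphism.
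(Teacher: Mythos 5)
Your proof is correct and follows essentially the same route as the paper's: restrict to the finite-index kernel of $\pi$, where the cocycle becomes a homomorphism into the additive group of $\mathcal{H}$, extend it to a continuous homomorphism on the ambient group via Theorem 0.8 of \cite{Sha00} (using square integrability), and kill the extension by the co-compactness of $\overline{[N,N]}$. The differences are only cosmetic --- you work with ordinary rather than reduced cohomology (justified by finite-dimensionality), use transfer for injectivity of restriction where the paper uses the analogous reduced-cohomology fact, and keep the homomorphism $\mathcal{H}$-valued where the paper passes to a real coordinate --- and the verifications you flag (square integrability of the finite-index subgroup, injectivity of inflation and restriction) all go through.
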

\begin{proof}
Assume to the contrary that there exists a non-(cohomologically-)zero cocycle
$b$ for the finite representation $\pi$ of the group $\Lambda / N$. 
Let $\Lambda _0 < \Lambda$ be the finite index kernel of $\pi$, and 
$\Gamma _0 = \Lambda _0 \cap \Gamma < \Gamma $. 
Since $\Lambda _0$ has finite index, $b |_{\Lambda _0}$ remains
non-zero. Being a 1-cocycle for the trivial $\Lambda _0$-representation, 
it defines a non-zero homomorphism of  $\Lambda _0$ to the additive vector 
group of the representation 
space, from which we get a nontrivial homomorphism 
$\varphi : \Lambda_{0} \to \mathbb{R}$.  The second author has shown (Theorem 0.8 in \cite{Sha00}) that, since $\Gamma_{0}$ is a finitely generated square integrable lattice in $G$, $\varphi |_{\Gamma_{0}}$ then extends to a 
homomorphism on $G_{0} = \overline{\Lambda_{0}}$ (note that the extension may not agree with $\varphi$ on all of $\Lambda_{0}$, just on $\Gamma_{0}$).
Call this extension $\psi : G_{0} \to \mathbb{R}$. It is of course 
still non-zero 
as $\psi |_{\Gamma_{0}} = \varphi \neq 0$ 
(noting that $\varphi (\Gamma _0)$ has finite index in $\varphi (\Lambda _0) $, thus
cannot be zero). 

Now, $\psi$ vanishes on $[N,N]$ hence also on $\overline{[N,N]}$.  
But $\overline{[N,N]}$ is co-compact in $G_{0}$, hence $\psi =0$, a 
contradiction.
This completes the proof of the Lemma, and with it the proof of Proposition \ref{P:Thalf}, and all of Theorem \ref{T:1}.
\end{proof}

\section{Proof of the Bijection of Commensurability Classes}

The second part of the Main Theorem follows immediately from Theorem \ref{T:1}
and the following general result.

\begin{proposition}\label{P:oneone}
Let $\Gamma < \Lambda$ be countable discrete groups such that $\Lambda$ commensurates $\Gamma$. Let $\varphi : \Lambda \to H$ be a dense homomorphism into a locally compact totally disconnected group $H$ such that $K = \overline{\varphi(\Gamma)}$ is compact open, and $\varphi^{-1}(K) = \Gamma$.  Then the map $N \mapsto \overline{\varphi(N)}$ induces a bijection between commensurability classes of normal subgroups $N \normal \Lambda$ with $[\Gamma : N \cap \Gamma] < \infty$, and commensurability classes of open normal subgroups of $H$.
\end{proposition}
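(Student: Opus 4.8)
The plan is to set up the correspondence $N \mapsto \overline{\varphi(N)}$ in one direction and $M \mapsto \varphi^{-1}(M)$ in the other, and to check these are mutually inverse on the level of commensurability classes. First I would record the basic structural facts that make the bookkeeping work. Since $K = \overline{\varphi(\Gamma)}$ is compact open and $H$ is totally disconnected, every open normal subgroup $M \normal H$ has $M \cap K$ open (hence finite index) in $K$, and the commensurability class of $M$ is determined by, and determines, the commensurability class of the compact open subgroup $M \cap K$ of $K$. Dually, because $\varphi^{-1}(K) = \Gamma$ and $\varphi$ has dense image, the condition $[\Gamma : N \cap \Gamma] < \infty$ on a normal subgroup $N \normal \Lambda$ should translate into $\overline{\varphi(N)} \cap K$ being open in $K$, which is exactly the openness of $\overline{\varphi(N)}$ in $H$ (an open subgroup meeting $K$ in an open set is open, using that $K$ is open). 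I would isolate this translation as the first lemma: for $N \normal \Lambda$ with $[\Gamma : N\cap\Gamma]<\infty$, the closure $\overline{\varphi(N)}$ is a normal (by density of $\varphi(\Lambda)$ and normality of $N$) open subgroup of $H$, and commensurable $N$'s yield commensurable closures.

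Next I would handle the reverse assignment $M \mapsto \varphi^{-1}(M)$. Given an open normal $M \normal H$, the preimage $\varphi^{-1}(M)$ is normal in $\Lambda$ (normality of $M$ plus equivariance), and I must check $[\Gamma : \varphi^{-1}(M) \cap \Gamma] < \infty$. Here $\varphi^{-1}(M)\cap\Gamma = \varphi^{-1}(M\cap K)$ since $\varphi^{-1}(K)=\Gamma$, and $M\cap K$ has finite index in $K$; the finiteness of $[\Gamma : \varphi^{-1}(M\cap K)]$ follows because $\varphi$ maps $\Gamma$ into $K$ with dense image in $K$, so the index is controlled by $[K : M\cap K]<\infty$. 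That $\varphi^{-1}$ respects commensurability is immediate.

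It then remains to verify the two round trips. For $M$ open normal I would show $\overline{\varphi(\varphi^{-1}(M))}$ is commensurable with $M$: by density of $\varphi(\Lambda)$ in $H$ and openness of $M$, the image $\varphi(\varphi^{-1}(M))$ is dense in $M$ (this uses that $M$ is open, so $\varphi(\Lambda)\cap M = \varphi(\varphi^{-1}(M))$ is dense in $M$), whence the closure equals $M$ on the nose, not merely up to commensurability. Conversely, for $N\normal\Lambda$ with finite index intersection, I would show $\varphi^{-1}(\overline{\varphi(N)})$ is commensurable with $N$: the inclusion $N \subseteq \varphi^{-1}(\overline{\varphi(N)})$ is trivial, and for the reverse index bound I would intersect with $\Gamma$ and compare using that $\overline{\varphi(N)}\cap K$ and $\overline{\varphi(N\cap\Gamma)}$ agree up to finite index inside $K$.

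The main obstacle I expect is the round trip $N \mapsto \varphi^{-1}(\overline{\varphi(N)}) $, specifically controlling how much the preimage of the closure can be larger than $N$ itself. The difficulty is that $\varphi$ need not be injective and $N$ need not be closed-friendly, so $\varphi^{-1}(\overline{\varphi(N)})$ can a priori pick up elements of $\Lambda$ whose $\varphi$-image only lies in the \emph{closure} of $\varphi(N)$ but not in $\varphi(N)$. The key technical point will be to reduce everything to the compact open subgroup $K$: since $\Lambda$ commensurates $\Gamma$ and $\ker\varphi \subseteq \Gamma$ (because $\ker\varphi \subseteq \varphi^{-1}(K)=\Gamma$), the ambiguity is absorbed into $\Gamma$, and the finite-index condition $[\Gamma:N\cap\Gamma]<\infty$ is precisely what forces $\varphi^{-1}(\overline{\varphi(N)})\cap\Gamma$ to differ from $N\cap\Gamma$ only by a finite index. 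I would phrase this as a final lemma comparing the two finite-index subgroups of $\Gamma$ via their images in the compact group $K$, where finiteness of all relevant indices makes the commensurability bookkeeping routine.
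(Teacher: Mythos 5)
Your overall architecture --- well-definedness of $N \mapsto \overline{\varphi(N)}$, surjectivity via $\overline{\varphi(\varphi^{-1}(M))} = M$ for open normal $M$, and injectivity reduced to showing $[\varphi^{-1}(\overline{\varphi(N)}) : N] < \infty$ --- is exactly the paper's, and the first two parts of your sketch are fine. The gap is in the last, and hardest, step. You propose to bound $[\varphi^{-1}(\overline{\varphi(N)}) : N]$ by ``intersecting with $\Gamma$ and comparing,'' i.e., by showing that $\varphi^{-1}(\overline{\varphi(N)}) \cap \Gamma$ and $N \cap \Gamma$ differ by finite index in $\Gamma$. That statement is true but automatic (both contain $N\cap\Gamma$, which already has finite index in $\Gamma$ by hypothesis), and it does not bound the index you need: writing $P = \varphi^{-1}(\overline{\varphi(N)})$, for nested normal subgroups $N \le P$ of $\Lambda$ the finiteness of $[P \cap \Gamma : N \cap \Gamma]$ says nothing about $[P : N]$ unless you also know $P \subseteq \Gamma N$. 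That containment is the actual content of the step, and your sketch never establishes it.

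Here is what is missing. Since $\varphi(N)$ is dense in $\overline{\varphi(N)}$ and $K$ is an open neighborhood of the identity, one has $\overline{\varphi(N)} \subseteq K\varphi(N)$. Hence any $h \in \overline{\varphi(N)} \cap \varphi(\Lambda)$ can be written $h = kn$ with $n \in \varphi(N)$ and $k = hn^{-1} \in K \cap \varphi(\Lambda) = \varphi(\Gamma)$ --- this is precisely where the hypothesis $\varphi^{-1}(K) = \Gamma$ enters, and it is the point your sketch skips. It gives $\overline{\varphi(N)} \cap \varphi(\Lambda) \subseteq \varphi(\Gamma N)$, whence $[\varphi^{-1}(\overline{\varphi(N)}) : \varphi^{-1}(\varphi(N))] \le [\Gamma N : N] = [\Gamma : \Gamma \cap N] < \infty$; combining this with $[\varphi^{-1}(\varphi(N)) : N] = [N\ker\varphi : N] < \infty$ (using $\ker\varphi \le \Gamma$, which you did note) finishes the argument. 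So your instinct that ``the ambiguity is absorbed into $\Gamma$'' is correct, but the lemma you propose to prove --- a comparison of two finite-index subgroups of $\Gamma$ via their images in $K$ --- is not the one that delivers it; what is needed is the multiplicative decomposition $\varphi^{-1}(\overline{\varphi(N)}) \subseteq \Gamma N$, and neither commensuration of $\Gamma$ by $\Lambda$ nor the comparison inside $K$ substitutes for it.
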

\begin{proof}
Let $N$ be a normal subgroup of $\Lambda$ with $[\Gamma : N \cap \Gamma] < \infty$.  Then $[K : \overline{\varphi(\Gamma \cap N)}] < \infty$ and 
$\overline{\varphi(\Gamma \cap N)}$ is a compact open subgroup of $H$.  Since $\overline{\varphi(N)}$ contains this group, $\overline{\varphi(N)}$ is an open normal subgroup of $H$.

Let $N_{1}$ and $N_{2}$ be commensurate normal subgroups of $\Lambda$ with $[\Gamma : N_{1} \cap \Gamma], [\Gamma : N_{2} \cap \Gamma] < \infty$.  Then $N_{1} \cap N_{2}$ is a normal subgroup of $\Lambda$ that has finite index in both $N_{1}$ and $N_{2}$.  Therefore $\overline{\varphi(N_{1} \cap N_{2})}$ is an open normal subgroup of $H$ that has finite index in both $\overline{\varphi(N_{1})}$ and $\overline{\varphi(N_{2})}$ meaning that $N_{1}$ and $N_{2}$ are mapped to the same commensurability class of open normal subgroups.  Therefore the induced map 
on the commensurability classes is well defined.

Surjectivity is obvious: given an open normal subgroup $M$ of $H$,  set 
$N = \varphi^{-1}(M)$.  Then $N$ is normal in $\Lambda$ and $[\Gamma : N \cap \Gamma] < \infty$ since $M$ contains a finite index subgroup of $\overline{\varphi(\Gamma)}$.  Of course, $\overline{\varphi(N)} = \overline 
{\varphi (\Lambda) \cap M}= M$, as $M$ is open and $\varphi(\Lambda)$ is dense.

To prove injectivity, 
take $N_{1}$ and $N_{2}$ to be normal subgroups of $\Lambda$ with $[\Gamma : N_{1} \cap \Gamma], [\Gamma : N_{2} \cap \Gamma] < \infty$ such that $\overline{\varphi(N_{1})}$ and $\overline{\varphi(N_{2})}$ are commensurate (open normal) subgroups.  Since $\varphi$ is a homomorphism, $\varphi^{-1}(\overline{\varphi(N_{1})})$ and $\varphi^{-1}(\overline{\varphi(N_{2})})$ are commensurate 
subgroups of $\Lambda$.  Once we show that $[\varphi^{-1}(\overline{\varphi(N_{1})}) : N_{1}] < \infty$ and $[\varphi^{-1}(\overline{\varphi(N_{2})}) : N_{2}] < \infty$ we would get immediately that $N_{1}$ and $N_{2}$ are commensurate, 
implying injectivity. So, we are only left with proving that any $N$ as in the Proposition 
has finite index in $\varphi^{-1}(\overline{\varphi(N)})$.

Indeed, as $\varphi(N)$ is dense in $\overline{\varphi(N)}$ and $K=\overline{\varphi(\Gamma)}$ is open, $\overline{\varphi(N)} \subseteq K \varphi(N)$.  Set $Q = \overline{\varphi(N)} \cap \varphi(\Lambda)$.  For $h \in Q$, write $h = kn$ for some $k \in K$ and $n \in \varphi(N)$.  Then $hn^{-1} = k \in K$ and $hn^{-1} \in \varphi(\Lambda) \varphi(N) = \varphi(\Lambda)$ so $hn^{-1} \in 
K \cap \varphi(\Lambda) = \varphi(\Gamma)$ (since $\varphi^{-1}(K) = \Gamma$).  Therefore $Q \subseteq \varphi(\Gamma)\varphi(N) = \varphi(\Gamma N)$.  Since $\varphi$ is a homomorphism,
\[
[\varphi ^{-1} (Q) : \varphi ^{-1} (\varphi(N))] \leq [Q : \varphi(N)] \leq [\varphi(\Gamma N) : \varphi(N)] \leq [\Gamma N : N] = [\Gamma : \Gamma \cap N] < \infty.
\]

Because $\Gamma = \varphi ^{-1}(K)$, $\ker \varphi < \Gamma$ and we also have:
\[
[ \varphi ^{-1} (\varphi(N)) : N]=[N \ker \varphi :N] \leq [N \Gamma : N] < \infty
\]

These two finiteness results yield $ [\varphi^{-1}(Q) : N] < \infty$,
precisely what needed to be proved.
\end{proof}



\section{Irreducible Lattices in Products of Groups}\label{S:bs2}

\begin{proof}[Proof of Corollary \ref{T:bs2}]
Assume first that all three conditions hold. 
 The first implies that $\mathrm{proj}_{G} : \Lambda \to G$ has a finite 
kernel (contained in $H$), which by density of $\Lambda$ is a normal subgroup 
of $H$. Replacing $\Lambda$ and $H$ with their quotient by 
$\ker(\mathrm{proj}_{G})$, we may and shall assume hereafter 
that this projection is faithful, so that we can naturally identify
elements of $\Lambda$ with their image in $G$ 
(it is immediate to verify that all the other assumptions remain intact, 
and that the conclusion for $ \Lambda / \ker(\mathrm{proj}_{G})$
implies it for $\Lambda $ itself).

Set $\Gamma = \Lambda \cap (G \times K)$ where $K$ is a compact open subgroup of $H$. As $K$ is open, $\proj_{K}~\Gamma$ is dense in $K$ (using that the 
projection of $\Lambda$ to $H$ is dense).
It is a general fact that when $L$ is co-compact (a lattice) 
in a locally compact group $M$, and $U$ is an open subgroup of $M$, $L \cap U$ is co-compact (a lattice) in $U$.  
Applying this to $\Lambda$ we find that $\Gamma$ is co-compact in $G \times K$.
Since $K$ is compact, the projection of $\Gamma$ to $G$ is discrete and co-compact in $G$.

As $K$ is commensurated by $H$, being a compact open subgroup of it, 
$\Gamma$ is commensurated by $\Lambda$.  The projection of $\Lambda$ to $G$ is dense and therefore $\proj_{G}~\Gamma < \proj_{G}~\Lambda < G$ satisfy the general setup of our Main Theorem, where condition (iii) ensures that its main
assumption is satisfied. It follows that every  
$N \normal \Lambda$ contains a finite index subgroup of $\Gamma$. 


Finally, observe that $\varphi = \proj_{H}$ satisfies the assumption in the
second part of the conclusion of the main Theorem. Condition (ii) in the Corollary says that there is only one commensurability class of open normal subgroups
of $H$ -- that of $H$ itself, thus the Main Theorem implies that 
there is only one for $\Lambda$ as well, which must be the class of those
normal subgroups having finite index. This proves the main direction
of the Corollary.
 


Now assume in the reverse direction that every infinite normal subgroup 
of $\Lambda$ has finite index. 
Observe that $N = \Lambda \cap (\{ e \} \times H)$ is normal in $\Lambda$ so 
it is either finite, or has finite index.  If the latter holds 
then the projection of $\Lambda$ to $G$ is finite, hence so is $G$,
contradicting the assumption that it is not a compact extension of an abelian group.
Thus condition (i) must hold. 

Next, suppose towards a contradiction that $H$ has an infinite index open normal subgroup $M$ (in particular, $H$ is infinite). Then  $\mathrm{proj}_{H}\Lambda  \cap M$ is
an infinite index normal subgroup of $\mathrm{proj}_{H}\Lambda $ (which is dense in $M$), 
hence 
its inverse image has the same property back in $\Lambda$. 
By our assumption on $\Lambda$ that inverse image
is finite, thus $M$ itself must be finite, and $H$ is discrete. It follows that
 $\Lambda$ must 
project onto $H$, and as $H$ is infinite the kernel of that projection cannot
have finite index, so by our assumption on $\Lambda$ it is finite. But this kernel is
a lattice in $G$ (in order for $\Lambda $ to be a lattice in $G \times H$), 
hence $G$ is compact, a contradiction. This proves the necessity of (ii).

Finally, let $M \normal G$ be a closed non-cocompact normal subgroup of $G$.  
A previous argument showed that our assumption implies that 
the map $\mathrm{proj}_{G} : \Lambda \to G$ has finite kernel, 
so $\Lambda_{0} = \mathrm{proj}_{G}~\Lambda$ also has the property 
that every infinite normal subgroup has finite index.
Set $N = \Lambda _0 \cap M$.  Then $N \normal \Lambda_{0}$, so $N$ is finite or has finite index in $\Lambda_{0}$.  If $N$ had finite index 
then so did
 $\overline{N} \normal \overline{\Lambda_{0}} = G$,  hence $M$ was co-compact.  Therefore the projection of $\Lambda$ to $G$ intersects every closed non-cocompact normal subgroup of $G$ finitely, which proves the necessity of 
condition (iii) and completes the proof of the whole Corollary.
\end{proof}

The case where $\Lambda$ is a non-uniform lattice is more involved due to the 
complication in the property (T) half of the proof, and 
requires a modification
of both the statement and the argument. 
Even if $\Lambda$ were assumed square integrable, the lattice $\Gamma <G$ may not 
have this property (e.g., when $\Lambda = SL_2(\mathbb{Z}[{\frac {1}{p}}])<  
(G=SL_2(\mathbb{R})) \times (H= SL_2(\mathbb{Q} _p))$, the lattice
$\Gamma = SL_2(\mathbb{Z})< G$ is not square integrable, even though $\Lambda$ 
is \cite{Sha00}). One can even construct such examples where $\Gamma$ is not finitely generated (taking $\Lambda$ as an irreducible non-uniform lattice in a product of rank one simple algebraic groups over a local field of positive 
characteristics). The simplest and most direct way to deal with this issue is to 
assume that $\Gamma$ itself be square integrable. In that case the whole argument (and result) goes through as is. However, with  a bit more structural assumptions one can do by assuming the square integrability of $\Lambda$. 
Here one has to modify the whole
property (T) argument so that instead of inducing unitary representations 
from $\Gamma$ to $G$, they are induced from $\Lambda$ to $G \times H$, and
then restricted to $G$. This requires getting into the proofs of some results 
from \cite{Sha00} and since we anyway do not currently have any concrete applications in mind, we prefer not to elaborate further on
this matter.
 
Finally, it is perhaps worth illustrating here how the lack of compact
generation assumption on $H$ yields useful additional flexibility. 
Let $K$ be a global field, and $\mathbf{G}$ be a simply connected, simple algebraic group defined over $K$. Then $\mathbf{G}(K)$ is a lattice
in $\mathbf{}(\mathbb{A})$, where $\mathbb{A}$ is the ring of adeles over $K$.
One may then decompose $\mathbf{G}(\mathbb{A})= G \times H$ where $G$ is a product over finitely many places including all the Archimedian ones, so that
its $S$-rank is at least 2, and
$H$ is the (restricted) direct product over all other places. The fact that
$\Lambda$ has dense projections follows from strong approximation,
while the fact that $H$ has no open normal subgroups follows from the 
Kneser-Tits conjecture over local fields. Here $\Gamma < G$ is square
integrable by \cite{Sha00}, so one deduces that every infinite normal
subgroup of $\mathbf{G}(K)$ has finite index. 
When  $\mathbf{G}$ is $K$-isotropic this can
then be easily upgraded to simplicity (modulo the center), but in the
anisotropic case the latter no longer holds in general. Of course, such normal
subgroup theorems can be similarly proved for $S$-arithmetic groups
when $S$ is any infinite set of places (including the Archimedian ones)
-- see also Section 2 in Chapter VIII of \cite{Ma91}. Note that 
the assumption that
$\mathbf{G}$ be simply connected is crucial when $S$ infinite. One can still
invoke our strategy when it isn't, but here
$H$ may contain open normal subgroups which 
classify, using the Main Theorem, the abstract normal subgroups of the corresponding
$S$-arithmetic group.

\normalsize
\providecommand{\bysame}{\leavevmode\hbox to3em{\hrulefill}\thinspace}
\providecommand{\MR}{\relax\ifhmode\unskip\space\fi MR }
\providecommand{\MRhref}[2]{%
  \href{http://www.ams.org/mathscinet-getitem?mr=#1}{#2}
}
\providecommand{\href}[2]{#2}

\end{document}